\theoremstyle{definition}
\newtheorem{defn}{Definition}[section]
\newtheorem{setup}[defn]{Setup}
\theoremstyle{remark}
\newtheorem{remark}{Remark}
\theoremstyle{plain}
\newtheorem{thm}[defn]{Theorem}
\newtheorem{prop}[defn]{Proposition}
\newtheorem{cor}[defn]{Corollary}
\newtheorem{lemma}[defn]{Lemma}
\newtheorem*{thma}{Theorem}
\newcommand{\V}{\mathbb{V}}
\newcommand{\cO}{\mathcal{O}}
\newcommand{\cJ}{\mathcal{J}}
\newcommand{\cF}{\mathcal{F}}
\newcommand{\cL}{\mathcal{L}}
\newcommand{\cM}{\mathcal{M}}
\newcommand{\cI}{\mathcal{I}}
\newcommand{\fa}{\mathfrak{a}}
\newcommand{\fb}{\mathfrak{b}}
\newcommand{\Q}{\mathbb Q}
\newcommand{\R}{\mathbb R}
\newcommand{\C}{\mathbb C}
\newcommand{\codim}{\textrm{codim}}
\newcommand{\Supp}{\textnormal{Supp }}
\newcommand{\Jac}{\textnormal{Jac}}
\newcommand{\adj}{\textnormal{adj}}
\title{Generalizations of the restriction theorem for multiplier ideals}
\author{Eugene Eisenstein}
\address{Department of Mathematics, University of Michigan, Ann Arbor, MI 48109, USA}
\email{eisenst@umich.edu}
\thanks{The author was partially supported by an NSERC Postgraduate Scholarship.}
\date{\today}
\begin{document}

\begin{abstract} We present an algebro-geometric perspective on some generalizations, due to S. Takagi, of the restriction theorem for multiplier ideals. The first version of the restriction theorem for multiplier ideals was discovered by Esnault and Viehweg, see \cite{EsnaultViehwegVanishing}. In a series of papers (see \cite{TakagiAdjoint} and \cite{TakagiSubadditivity}) S. Takagi has discovered generalizations of the restriction theorem and some formulas for multiplier ideals that follow from the restriction theorem. He uses the technique of tight closure and reduction to positive characteristic. We are able to provide an algebro-geometric proof of generalizations of his restriction theorem in \cite{TakagiAdjoint} and his subadditivity theorem in \cite{TakagiSubadditivity}. We also prove an adjunction formula for relative canonical divisors of factorizing resolutions of singularities.
\end{abstract}

\maketitle

\section{Introduction}                                        

Let $A$ be a smooth complex projective variety and let $Z$ be an $\R_{>0}$-linear combination of subschemes of $A$. In this situation, one can construct a multiplier ideal
\[\cJ(A,Z) \subseteq \cO_A\]
that measures the singularities of the equations defining $Z$. These ideals have found numerous important applications in high-dimensional algebraic geometry. Spectacular applications of the analytic perspective on these ideals have been discovered by Y.-T. Siu in the remarkable papers \cite{SiuInvariance}, \cite{SiuTwistedInvariance}, see also \cite{PaunSiusTheorem}. The celebrated recent work of Hacon and McKernan in \cite{HMFlips}, \cite{HMRC} and \cite{HMBounded}, as well as the work of Takayama in \cite{TakayamaBounded}, uses and expands the concept of the multiplier ideal. There are many more examples of important applications and the theory is still rapidly developing. Chapters 9 through 11 of the excellent book \cite{LazarsfeldPositivityII} provide a good introduction to the subject.

An important tool in the study and application of multiplier ideals is the restriction theorem: if $X \subseteq A$ is a smooth hypersurface not contained in the support of $Z$ then there is an ideal, called the adjoint ideal $\adj_X(A,Z)$, that fits into a short exact sequence
\[0 \to \cJ(A, Z) \otimes \cO_A(-X) \to \adj_X(A,Z) \to \cJ(X,Z_{|X}) \to 0\]
with the right-hand map given by restriction to $X$. In particular, it follows easily from the construction that $\adj_X(A,Z) \subseteq \cJ(A, Z)$ and so we have that $\cJ(X, Z_{|X}) \subseteq \cJ(A,Z) \cdot \cO_X$, the restriction theorem of \cite{EsnaultViehwegVanishing}. This theorem leads to a number of important consequences, such as the subadditivity theorem in \cite{DELSubadditivity}.

The multiplier ideal can be defined under the assumption that $X$ is only $\Q$-Gorenstein. It is natural to ask: to what extent do the above theorems extend to this more general situation? In a series of papers, S. Takagi investigated this question. In \cite{TakagiAdjoint} he defined an adjoint ideal along a  subvariety $X$ of a smooth variety $A$, which we will call the Takagi adjoint ideal, and proved using positive characteristic tight closure techniques that when $X$ is Gorenstein and Cohen-Macaulay then
\[\adj_X(A,Z) \cdot \cO_X = \cJ(X, \V(J_1) + Z_{|X})\]
where $J_1$ is the l.c.i.-defect sheaf of $X$ (see Section \ref{secadjunction} for a brief review of, and references for, l.c.i.-defect sheaves). Our main result is a proof of a stronger statement. We develop a set of techniques that use only standard characteristic zero algebraic geometry and greatly simplify the proof of Takagi's theorem. Our main result is:

\begin{thma}[Theorem \ref{Takagirestriction}] Let $X \subseteq A$ be a $\Q$-Gorenstein subvariety of an ambient smooth variety with a Gorenstein index $r$ and codimension $c$. Let $Z$ be an $\R_{>0}$-linear combination of subschemes of $A$, not containing any component of $X$ in their support. Then there exists a short exact sequence
\[0 \to \cJ(A, cX + Z) \to \adj_X(A,Z) \to \cJ\left(X, \frac{1}{r}\V(J_r) + Z_{|X}\right) \to 0\]
with the right-hand map given by restriction to $X$. In particular,
\[\cJ\left(X, \frac{1}{r}\V(J_r) + Z_{|X}\right) = \adj_X(A,Z) \cdot \cO_X.\]
Furthermore, the Takagi adjoint ideal satisfies the obvious analog of local vanishing.
\end{thma}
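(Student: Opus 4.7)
The plan is to realize both $\adj_X(A,Z)$ and $\cJ(A, cX+Z)$ as direct images of line bundles on a single factorizing log resolution $\mu \colon A' \to A$, and then to derive the short exact sequence by pushing forward the tautological restriction sequence to the strict transform $X' \subseteq A'$. I would choose $\mu$ so that $X'$ is smooth, the restriction $\mu|_{X'} \colon X' \to X$ is a resolution of singularities of $X$, the ideal sheaf factorizes as $I_X \cdot \cO_{A'} = I_{X'} \otimes \cO_{A'}(-F)$ for an effective exceptional divisor $F$, the divisor $F + X' + \mu^{-1}(\Supp Z)$ has simple normal crossings support, and a log resolution of the $r$-th l.c.i.-defect sheaf $J_r$ is dominated; further blow-ups along $X'$ can be performed if necessary so that $X'$ itself is Cartier on $A'$. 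In these terms both ideals of interest become $\mu_*$ of line bundles: one has $\adj_X(A,Z) = \mu_*\cO_{A'}(D)$ for a divisor of the form $D = K_{A'/A} - \lfloor cF + \mu^*Z\rfloor + (\text{correction along } X')$, and the $-X'$ twist yields $\cJ(A, cX+Z) = \mu_*\cO_{A'}(D - X')$.

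Applying $\mu_*$ to the tautological short exact sequence
\[0 \to \cO_{A'}(D - X') \to \cO_{A'}(D) \to \cO_{A'}(D)\big|_{X'} \to 0\]
produces the desired three-term sequence on $A$ provided two things hold. First, $R^1\mu_*\cO_{A'}(D - X') = 0$, which simultaneously gives surjectivity on the right and establishes the local vanishing assertion for $\adj_X(A,Z)$; this is a relative Kawamata--Viehweg vanishing applied to the $\R$-divisor $D - X' - K_{A'/A}$, which by construction is (up to a round-down) the negative of the pullback of an effective $\R$-divisor on $A$, hence $\mu$-nef and $\mu$-big. Second, the pushforward $(\mu|_{X'})_*\bigl(\cO_{A'}(D)|_{X'}\bigr)$ must be identified with $\cJ\!\left(X, \tfrac{1}{r}\V(J_r) + Z_{|X}\right)$.

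This second identification is the main obstacle and the technical heart of the argument. Because $X$ is only $\Q$-Gorenstein with index $r$, the canonical class $K_X$ is only $\Q$-Cartier, and the comparison between $K_{X'/X}$ and $K_{A'/A}\big|_{X'}$ picks up a correction measured by $J_r$. What needs to be proved is an adjunction identity of the form
\[\bigl(K_{A'/A} + (\text{correction at } X')\bigr)\big|_{X'} \equiv K_{X'/X} - \tfrac{1}{r}(\mu|_{X'})^*\V(J_r),\]
valid modulo exceptional integral divisors on $X'$: this is precisely the adjunction formula for relative canonical divisors of factorizing resolutions advertised as an independent result in the abstract. Its proof should be local on $A$ and should compare a Koszul-type complex resolving $\cO_X$ with $\cO_{X'}$, reading off the discrepancy through the $r$-th Fitting ideal defining $J_r$. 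Granted this adjunction, substituting into the restricted divisor $D\big|_{X'}$ and taking round-downs produces exactly the boundary $\tfrac{1}{r}\V(J_r) + Z_{|X}$ on $X$, so the pushforward computes the claimed multiplier ideal by definition; together with the vanishing this completes the short exact sequence and yields the local vanishing statement for the adjoint ideal.
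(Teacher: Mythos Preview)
Your overall architecture matches the paper's: choose a factorizing log-resolution, push forward the tautological restriction sequence to the strict transform, identify the kernel with $\cJ(A, cX+Z)$, and identify the quotient via an adjunction formula for relative canonical divisors. You are also right that this adjunction identity is the technical heart.

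There is, however, a genuine gap in how you handle higher codimension. You propose ``further blow-ups along $X'$ \ldots\ so that $X'$ itself is Cartier on $A'$,'' and then write the kernel as the line bundle $\cO_{A'}(D-X')$. When $c=\codim_A X>1$ this is impossible while keeping $X'$ as the strict transform of $X$: blowing up a smooth codimension-$c$ center replaces it by a $\mathbb{P}^{c-1}$-bundle, which is Cartier but no longer maps birationally to $X$, so you lose the identification of the restricted sheaf with a multiplier ideal on $X$. The paper does \emph{not} make $\overline{X}$ Cartier. It keeps $\overline{X}$ of codimension $c$ in $\overline{A}$ and works with the non-invertible kernel $\cI_{\overline{X}}\cdot\cO_{\overline{A}}(D)$; to obtain $R^i\pi_*(\cI_{\overline{X}}\cdot\cO_{\overline{A}}(D))=0$ it passes to the blow-up $A''\to\overline{A}$ of $\overline{X}$, where this sheaf pulls back to the line bundle $\cO_{A''}(\lceil K_{A''/A}-(\pi')^{-1}Z-c(\pi')^{-1}X\rceil)$, applies ordinary local vanishing there, and descends via the Leray spectral sequence (Lemma~\ref{localvanishing}). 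This two-step manoeuvre is exactly what your outline is missing. Relatedly, in the factorizing-resolution formulation there is no ``correction along $X'$'' in $D$: one has simply $D=\lceil K_{\overline{A}/A}-\pi^{-1}Z\rceil-cR_X$. Also, local vanishing for $\adj_X(A,Z)$ does not follow from the kernel vanishing alone; one needs in addition $R^i f_*\cO_{\overline{X}}(D_{|\overline{X}})=0$, which comes from local vanishing on $X$ once the adjunction formula has rewritten $D_{|\overline{X}}$ as $\lceil K_{\overline{X}/X}-\frac{1}{r}\pi^{-1}(J_r)-\pi^{-1}(Z_{|X})\rceil$.

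As for the adjunction formula itself, the paper's proof is not via Koszul complexes or Fitting ideals. It proceeds through a ``chain rule'' identity for Jacobian ideals on a factorizing resolution (Lemma~\ref{changeofcoords} and Lemma~\ref{jacadj}), combined with the interpretation $\Jac_f^r=(\cI_{r,X}\cdot\cO_{\overline{X}})\cdot\cO_{\overline{X}}(-rK_{\overline{X}/X})$ (Lemma~\ref{adjunction2}); this yields the exact equality $K_{\overline{X}/X}-\frac{1}{r}D=(K_{\overline{A}/A}-cR_X)_{|\overline{X}}$ of Theorem~\ref{adjunction}, with no ambiguity ``modulo exceptional integral divisors.''
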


In \cite{TakagiSubadditivity}, S. Takagi investigates the subadditivity theorem on a $\Q$-Gorenstein variety $X$. He shows that
\[\Jac_X \cdot \cJ(X, Z_1 + Z_2) \subseteq \cJ(X, Z_1) \cJ(X, Z_2)\]
where $\Jac_X$ is the Jacobian ideal of $X$. To prove this theorem he again uses positive characteristic tight closure methods. We are able to follow an approach similar to our characteristic zero approach to Takagi's restriction theorem to prove his subadditivity theorem without the use of tight closure.

The key idea in our proofs is the systematic use of factorizing resolutions, proven to exist in \cite{BravoVillamayorSFR}. The definition of these resolutions is given in the next section. The importance of factorizing resolutions in our methods stems from the following new variant of the adjunction formula for factorizing resolutions, see Theorem \ref{adjunction}:
\[K_{\overline{X}/X} - \frac{1}{r} D = (K_{\overline{A}/A} - cR_X)_{|\overline{X}}\]
where $A$ is a smooth variety that contains a $\Q$-Gorenstein subvariety $X$, $\pi : \overline{A} \to A$ is a factorizing resolution of $X \subseteq A$ that is also a log-resolution of $J_r$, $\overline{X}$ is the strict transform of $X$, $R_X$ is defined by
\[\cI_X \cdot \cO_{\overline{A}} = \cI_{\overline{X}} \cdot \cO_{\overline{A}}(-R_X),\]
$D$ is given by
\[J_r \cdot \cO_{\overline{X}} = \cO_{\overline{X}}(-D)\]
and $r$ is a Gorenstein index of $X$.

We are indebted to K. Smith for her insight and help on many occasions as well as L. Ein, V. Lozovanu, K. Schwede, S. Takagi, K. Tucker and C. Zeager for useful conversations. Finally, we are very grateful for the help of R. Lazarsfeld and M. Musta\c t\u a. Without their infinite patience and deep insight this paper would not have even been started.

\section{Conventions}

We fix the following notation.

\begin{enumerate}
\item The Kawamata-Viehweg vanishing theorem and resolutions of singularities feature prominently in our methods. We therefore work over an algebraically closed field $k$ of characteristic zero. The case that most concerns us is $k = \C$ and we will assume this for simplicity from now on. A variety is an integral separated scheme of finite type over $k$. We may use the terms \emph{reducible} variety to denote a reduced separated scheme of finite type over $k$.

\item A simple normal crossings variety is a possibly reducible variety $X$, with smooth irreducible components, so that locally analytically at every point of $X$ there exists an isomorphism of $X$ with a subvariety of $\mathbb{A}^n_\C$ defined by unions of intersections of coordinate hyperplanes. A scheme $X$ has simple normal crossings support if $X_{\textnormal{red}}$ is a simple normal crossings variety. We say that $X$ has simple normal crossings with $Y$ if $X \cup Y$ has simple normal crossings support.

In particular, if $X$ is a subscheme of a smooth variety $A$ then $X$ has simple normal crossings support if locally at every point $p \in A$ there exist regular parameters $x_i$ so that the germ at $p$ of the ideal sheaf of $X$ is generated by elements of the form $x_{i_1}^{e_1} \cdots x_{i_s}^{e_s}$.

\item If $\pi : X' \to X$ is a birational morphism then we write $\textnormal{exc}(\pi)$ for the set of points of $X'$ at which $\pi$ is not an isomorphism, endowed with the reduced scheme structure.

\item An embedded resolution of singularities of a generically smooth subscheme $X$ contained in a possibly singular variety $A$ is a birational morphism $\pi : A' \to A$ so that:
\begin{enumerate}
\item $A'$ is smooth and $\pi$ is an isomorphism at every generic point of $X$.
\item The set $\textnormal{exc}(\pi)$ is a divisor with simple normal crossings support.
\item The strict transform of $X$ in $A'$, denoted $X'$, is smooth and has simple normal crossings with $\textnormal{exc}(\pi)$.
\end{enumerate}
Such a resolution exists whenever $X \not\subseteq A_\textnormal{sing}$.

A factorizing resolution of singularities of $X \subseteq A$ as above is a birational morphism $\pi : A' \to A$ that is an embedded resolution of singularities of $X$ in $A$ so that, if $X'$ is the strict transform of $X$ in $A'$, we have that
\[\cI_X \cdot \cO_{A'} = \cI_{X'} \cdot \cL\]
with $\cL$ a line bundle and the support of $\cI_X \cdot \cO_{A'}$ is a simple normal crossings variety. If $A$ is smooth these resolutions were shown to exist in \cite{BravoVillamayorSFR}. We will show in Lemma \ref{sfr} that the case of $A$ singular and $X \not\subseteq A_{\textnormal{sing}}$ follows formally from the smooth case.

Let $Z$ be an $\R_{> 0}$-linear combination of subschemes of $A$ with no component of $X$ contained in the support of $Z$. An embedded resolution of singularities $\pi : A' \to A$ as above is also a log-resolution of $Z$ if $\pi^{-1} Z$ is a divisor with simple normal crossings support and $\Supp(\pi^{-1} Z) \cup \textnormal{exc}(\pi) \cup X'$ is a simple normal crossings variety.

\item $X$ is said to be $\Q$-Gorenstein if $X$ is normal and there is some natural number $r$ so that $rK_X$ is a Cartier divisor. Any such $r$ is called a Gorenstein index of $X$.

\item The abbreviation l.c.i. stands for locally complete intersection. We say that a variety $X$ is l.c.i. at a point $p \in X$ if the local ring $\cO_{X,p}$ is a locally complete intersection ring.

\item If $X \subseteq A$ is an equidimensional subscheme of a variety $A$ we write $\codim_A(X)$ for the codimension of $X$ in $A$.

\item If $\cL$ is a line bundle and $\cF$ is a subsheaf of $\cL$ then we can write $\cF = \cI \cdot \cL$ for some ideal sheaf $\cI$. We will say that $\cF$ generates the ideal $\cI$.

\item If $\cI$ is an ideal sheaf, we denote by $\V(\cI)$ the subscheme defined by $\cI$.

\item A multi-index of type
\[\binom{n}{m}\]
is an ordered list of integers $(i_1, \ldots, i_m)$ so that $i_s < i_{s+1}$ and $i_s \in [1, n]$ for all $s$. If $I$ is a multi-index we write
\[dx_I = dx_{i_1} \wedge \cdots \wedge dx_{i_m}\]
as short-hand for differential forms.

\item If $D$ is a $\Q$-divisor on $A$ and $X$ is a subvariety not contained in the support of $D$ we will write $D_{|X}$ for the intersection of $D$ with $X$ as a $\Q$-divisor on $X$. If $\cF$ is a sheaf on $A$ we will write $\cF_{|X}$ for $\cF \otimes \cO_X$.
\end{enumerate}

\section{A different definition of the Takagi adjoint ideal}

In this section we give a new way to compute the adjoint ideal defined by Takagi in \cite{TakagiAdjoint}. This new way is analogous to the more familiar definition from \cite{LazarsfeldPositivityII}. We will first need the following lemma.

\begin{lemma}\label{sfr} Let $X \subseteq A$ be a generically smooth subscheme of a not necessarily smooth variety $A$. Let $\pi_1 : A' \to A$ be a birational morphism from a smooth variety $A'$ that is an isomorphism at the generic points of the components of $X$. Let $X'$ be the strict transform of $X$ in $A'$ and let $E$ be a divisor on $A'$ with simple normal crossing support so that no component of $X'$ is contained in $E$. Then there exist morphisms
\[\xymatrix{
\overline{X} \ar@{^{(}->}[r]\ar[d] & \overline{A}\ar[d]^{\pi_2} \\
X' \ar@{^{(}->}[r]\ar[d] & A'\ar[d]^{\pi_1} \\
X \ar@{^{(}->}[r] & A
}\]
where $\overline{X}$ is the strict transform of $X$ in $\overline{A}$, which we assume exists, so that $\pi := \pi_1 \circ \pi_2$ is a factorizing resolution of $X$ inside $A$ and $\overline{X} \cup \textnormal{exc}(\pi) \cup \Supp(\pi_2^*E)$ is a simple normal crossings variety.
\end{lemma}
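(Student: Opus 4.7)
The plan is to reduce to the smooth-ambient case of Bravo-Villamayor's factorizing desingularization theorem, so that $\pi_2$ is produced by applying their result to the pair $(X', A')$ and $\pi := \pi_1\circ\pi_2$ inherits the required factorization. Here $A'$ is smooth by hypothesis, and the strict transform $X'\subseteq A'$ is generically smooth since it inherits the generic smoothness of $X$ through the isomorphism $\pi_1$ at the generic points of $X$.

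I would apply a strong form of Bravo-Villamayor's theorem to $X'\subseteq A'$, requiring additionally that $\pi_2$ log-resolve the divisor $E+\textnormal{exc}(\pi_1)$ on $A'$ (after preprocessing $\textnormal{exc}(\pi_1)$ into SNC by preliminary blowups in smooth centers disjoint from the generic points of $X'$, if needed). The output is a birational morphism $\pi_2:\overline{A}\to A'$ such that $\overline{A}$ is smooth, $\pi_2$ is an isomorphism at the generic points of $X'$, the strict transform $\overline{X}$ of $X'$ is smooth, one has $\cI_{X'}\cdot\cO_{\overline{A}} = \cI_{\overline{X}}\cdot\cL_2$ for a line bundle $\cL_2$, and the union $\overline{X}\cup\textnormal{exc}(\pi_2)\cup\pi_2^{-1}(\textnormal{exc}(\pi_1))\cup\Supp(\pi_2^*E)$ has simple normal crossings support. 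It is then routine to verify that $\pi := \pi_1\circ\pi_2$ is an embedded resolution of $X$ in $A$: smoothness is built in, $\pi$ is an isomorphism at generic points of $X$, the exceptional set $\textnormal{exc}(\pi)\subseteq\pi_2^{-1}(\textnormal{exc}(\pi_1))\cup\textnormal{exc}(\pi_2)$ has SNC support and is SNC with $\overline{X}$, and the strict transform of $X$ in $\overline{A}$ equals $\overline{X}$ since strict transforms compose through morphisms that are isomorphisms at the relevant generic points.

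The main obstacle is verifying the factorization $\cI_X\cdot\cO_{\overline{A}} = \cI_{\overline{X}}\cdot\cL$ for some line bundle $\cL$, since Bravo-Villamayor applied to $X'$ a priori only gives the factorization of $\cI_{X'}$. The key observation is that $\cI_X\cdot\cO_{A'}\subseteq\cI_{X'}$ with equality on $A'\setminus\textnormal{exc}(\pi_1)$, so $\cI_X\cdot\cO_{\overline{A}}$ and $\cI_{\overline{X}}\cdot\cL_2$ coincide on the dense open $\overline{A}\setminus\pi_2^{-1}(\textnormal{exc}(\pi_1))$. In the SNC local coordinates afforded by the previous step, both ideals are monomial along $\pi_2^{-1}(\textnormal{exc}(\pi_1))$, and a direct local calculation identifies $\cI_X\cdot\cO_{\overline{A}}$ as $\cI_{\overline{X}}\cdot\cL$, where $\cL$ differs from $\cL_2$ by an effective twist supported on $\pi_2^{-1}(\textnormal{exc}(\pi_1))$. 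A cleaner alternative would be to apply the Bravo-Villamayor factorizing construction directly to the ideal sheaf $\cI_X\cdot\cO_{A'}$ on $A'$, whose generic reduced co-support is $X'$; the desired factorization then emerges immediately from their conclusion.
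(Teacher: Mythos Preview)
Your reduction to Bravo--Villamayor on the smooth variety $A'$ is the right idea, but there is a real gap in the step where you pass from the factorization of $\cI_{X'}$ to a factorization of $\cI_X\cdot\cO_{\overline{A}}$. The problem is that the scheme $\pi_1^{-1}(X)=\V(\cI_X\cdot\cO_{A'})$ can have irreducible components other than $X'$ that are \emph{not} divisors, and a factorizing resolution of $X'$ (even one that log-resolves $E+\textnormal{exc}(\pi_1)$) does nothing to principalize those. Your assertion that ``both ideals are monomial along $\pi_2^{-1}(\textnormal{exc}(\pi_1))$'' is exactly what fails. Concretely: take $A=A'=\mathbb{A}^4$ and let $\pi_1$ be the blow-up of $V(x_1,x_3)$; with $X=V(x_1,x_2)$, in the chart $x_1=x_3t$ one has $\cI_X\cdot\cO_{A'}=(x_3t,\,x_2)$, whose zero locus is $X'\cup V(x_3,x_2)$ with $X'=V(t,x_2)$. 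The extra component $V(x_3,x_2)$ has codimension $2$, and $(x_3t,x_2)$ is not of the form $(f)\cdot(t,x_2)$ for any $f$. Here $X'$ is already smooth and SNC with $\textnormal{exc}(\pi_1)=V(x_3)$, so your $\pi_2$ could be the identity, yet the desired factorization of $\cI_X\cdot\cO_{\overline{A}}$ simply does not hold.

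Your ``cleaner alternative'' of applying Bravo--Villamayor directly to $\cI_X\cdot\cO_{A'}$ runs into the same obstruction: the cosupport of this ideal is all of $\pi_1^{-1}(X)$, not just $X'$, and at the generic points of the extra components the scheme need not be reduced (and there may be embedded primes), so the hypothesis ``generically smooth'' is not available. The paper's proof is essentially a correct implementation of this alternative: it first takes a factorizing resolution of $(\pi_1^{-1}(X))_{\textnormal{red}}$, then blows up the strict transforms of the extraneous components to force them to become divisorial, then strips multiplicities from the resulting ideal to obtain a generically reduced scheme $Y$ (supported on the strict transform of $X$ together with divisors), and only then applies Bravo--Villamayor to $Y$. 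Each of these preparatory steps addresses one of the failure modes above, and none of them is dispensable in general.
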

\begin{proof} We perform the following procedure. Take a factorizing resolution of $(\pi_1^{-1}(X))_\textnormal{red}$. Note that the strict transforms of all irreducible components of $(\pi_1^{-1}(X))_\textnormal{red}$ are smooth and disjoint. Blow up the supports of the strict transforms of all irreducible components of $(\pi_1^{-1}(X))_\textnormal{red}$ other than the strict transforms of the components of $X$. Let $\pi^\circ : A'' \to A'$ be the resoluting morphism, let $\pi'' : A'' \to A$ be its composition with $\pi_1$ and consider the subscheme $X''$ of $A''$ defined by the ideal sheaf
\[\cI_{X''} := (\cI_{X} \cdot \cO_{A''}) \cdot \cO_{A''}(-(\pi^\circ)^*E).\]
This is a scheme supported on the strict transform of $X$ and a union of divisors on a smooth variety but it may have some embedded primes.

Since $A''$ is smooth the divisorial components of $X''$ are locally principal. The embedded primes of $X''$ are supported either on the strict transform of $X$, which is generically reduced, or on one of these divisorial components. Write the divisorial part of $X''$ as
\[\sum a_i D_i\]
with $a_i > 0$ and let
\[\cL = \cO_{A''} \left( \sum (a_i - 1) D_i \right).\]
The subscheme $Y$ of $A''$ defined by the ideal $\cI_Y = \cI_{X''} \cdot \cL$ is generically reduced. We conclude by taking a factorizing resolution $\overline{A}$ of $Y$, which is now possible since it is generically smooth, and noticing that the expansion of $\cI_Y$ and $\cI_{X''}$ to this resolution must differ by the pull-back of $\cL$. By definition of $X''$ it follows that
\[\cI_X \cdot \cO_{\overline{A}} = \cI_{\overline{X}} \cdot \cM\]
for a line bundle $\cM$ and the simple normal crossings hypothesis in the lemma is also satisfied.
\end{proof}

\begin{cor}\label{sfrcor} If no component of $X$ is contained in $A_\textnormal{sing}$ then a factorizing resolution of $X$ always exists. Furthermore we can choose this resolution to be a log-resolution $\pi$ of any $\R_{>0}$-linear combination $Z$ of subschemes of $A$ not containing any component of $X$ in its support.
\end{cor}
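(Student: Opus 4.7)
The plan is to derive the corollary from Lemma~\ref{sfr} by producing an appropriate first-stage birational modification $\pi_1 : A' \to A$ together with an SNC divisor $E$ on $A'$ that encodes all the auxiliary data (the singularities of $A$ and the linear combination $Z$) that we want the final resolution to handle.

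First I would invoke Hironaka's theorem to obtain a birational morphism $\pi_1 : A' \to A$ with $A'$ smooth such that $\pi_1$ is an isomorphism over $A \setminus A_{\textnormal{sing}}$ (hence, by the hypothesis on $X$, at the generic points of every component of $X$), $\pi_1^{-1}(Z)$ has simple normal crossings support, and the union $\textnormal{exc}(\pi_1) \cup \Supp(\pi_1^{-1}(Z)) \cup X'$ is a simple normal crossings divisor on $A'$, where $X'$ denotes the strict transform of $X$. I would then set $E$ to be the reduced divisor on $A'$ whose support is $\textnormal{exc}(\pi_1) \cup \Supp(\pi_1^{-1}(Z))$. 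Because $\pi_1$ is an isomorphism at the generic points of $X$ no component of $X'$ lies in $\textnormal{exc}(\pi_1)$, and because no component of $X$ lies in $\Supp(Z)$ no component of $X'$ lies in $\Supp(\pi_1^{-1}(Z))$; so $E$ meets the hypothesis of Lemma~\ref{sfr}.

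Applying Lemma~\ref{sfr} produces $\pi_2 : \overline{A} \to A'$ so that $\pi := \pi_1 \circ \pi_2$ is a factorizing resolution of $X \subseteq A$ and $\overline{X} \cup \textnormal{exc}(\pi) \cup \Supp(\pi_2^* E)$ has simple normal crossings support. Since $\Supp(\pi^{-1}(Z)) = \pi_2^{-1}(\Supp(\pi_1^{-1}(Z))) \subseteq \Supp(\pi_2^* E)$, the union $\overline{X} \cup \textnormal{exc}(\pi) \cup \Supp(\pi^{-1}(Z))$ is also SNC, exhibiting $\pi$ as the desired factorizing log resolution of $Z$. The first assertion of the corollary is the special case $Z = 0$.

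There is no serious obstacle here; the only nuance is the bookkeeping needed to feed $Z$ through the lemma via the SNC divisor $E$ on $A'$ and to observe afterward that the SNC conclusion for $\pi_2^*E$ transfers to the log resolution property for $\pi^{-1}(Z)$, which amounts to the set-theoretic containment $\pi_2^{-1}(\Supp(\pi_1^{-1}(Z))) \subseteq \Supp(\pi_2^*E)$.
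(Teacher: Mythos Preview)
Your approach is essentially the paper's: build a smooth first-stage model $A'$ on which the exceptional locus together with the pullback of $Z$ forms an SNC divisor $E$, then feed $(\pi_1,E)$ into Lemma~\ref{sfr}. The paper does this in two steps (first desingularize $A$, then log-resolve the pulled-back data) while you do it in one Hironaka invocation, but that is cosmetic.

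Two small inaccuracies to clean up. First, you cannot demand that $\pi_1$ be an isomorphism over all of $A \setminus A_{\textnormal{sing}}$ \emph{and} that $\pi_1^{-1}(Z)$ be SNC: if $Z$ sits inside the smooth locus with non-SNC singularities, blow-ups there are forced. What Hironaka actually gives is an isomorphism over the open where $A$ is smooth and $Z$ already has SNC support; together with the hypothesis that no component of $X$ lies in $\Supp Z$, this still yields the only fact you use, namely that $\pi_1$ is an isomorphism at every generic point of $X$. Second, $\textnormal{exc}(\pi_1)\cup\Supp(\pi_1^{-1}(Z))\cup X'$ is generally not a \emph{divisor}, since $X$ (and hence $X'$) may have higher codimension; moreover Lemma~\ref{sfr} imposes no SNC condition on $X'$ at all, so you should simply drop $X'$ from that sentence. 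With these fixes the argument goes through and matches the paper.
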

\begin{proof} Let $\pi' : A' \to A$ be any birational morphism with $A'$ smooth that is an isomorphism at the generic points of $X$. Take a log-resolution $\pi'' : A'' \to A'$ of $(\pi')^{-1}Z + \textnormal{exc}(\pi)$. Let $\pi_1 = \pi' \circ \pi''$ and let
\[E = (\pi'')^{-1}(\Supp(\pi')^{-1}Z + \textnormal{exc}(\pi)).\]
We apply the previous lemma to this $\pi_1$ and $E$ to conclude.
\end{proof}

We are now ready to discuss the adjoint ideal. First we note that Takagi's original definition generalizes naturally to our setting.

\begin{defn}[Takagi]\label{takadj} Let $X$ be a generically smooth subscheme of a possibly singular variety $A$. Let $c = \codim_A(X)$ and let $Z$ be any $\R_{>0}$-linear combination of subschemes of $A$ with no component of $X$ contained in $\Supp(Z) \cup A_\textnormal{sing}$. Finally suppose that $A$ is $\Q$-Gorenstein. Let $f : A' \to A$ be the blow-up of $X$ and let $E$ be the reduced but possibly reducible exceptional divisor of $f$ that dominates $X$. Let $g : A'' \to A'$ be a log-resolution of $f^{-1}(X) + f^{-1}(Z)$ so that $g_*^{-1}E$ is smooth and let $\pi = f \circ g$. Set
\[\adj_X(A,Z) := \pi_*\cO_{A''}(\lceil K_{A''/A} - \pi^{-1}Z - c \cdot \pi^{-1}X + g_*^{-1}E \rceil).\]
\end{defn}

It is worthwhile to remark that the proof of Lemma 1.6 in \cite{TakagiAdjoint} goes through \emph{mutatis mutandis}, so that Takagi's definition is independent of the choice of log-resolution $g$, even if $A$ is singular. We can now state the definition we propose for the Takagi adjoint ideal.

\begin{defn}\label{myadj} Let $X$ be a generically smooth equidimensional subscheme of a variety $A$. Let $c = \codim_A(X)$ and let $Z$ be any $\R_{>0}$-linear combination of subschemes of $A$ with no component of $X$ contained in $\Supp(Z) \cup A_\textnormal{sing}$. Finally suppose that $A$ is $\Q$-Gorenstein. Let $\pi : \overline{A} \to A$ be a log-resolution of $Z$ that is also a factorizing resolution of $X$ in the sense of Corollary \ref{sfrcor}. Let $\overline{X}$ be the strict transform of $X$ in $\overline{A}$. Write
\[\cI_X \cdot \cO_{\overline{A}} = \cI_{\overline{X}} \cdot \cO_{\overline{A}}(-R_X).\]
We define
\[\adj'_X(A, Z) := \pi_* \cO_{\overline{A}}(\lceil K_{\overline{A}/A} - \pi^{-1}(Z) \rceil - cR_X).\]
\end{defn}

First we prove that these two definitions always compute the same ideal.

\begin{prop} Notation as in the preceding definition. Then
\[\adj_X(A, Z) = \adj'_X(A, Z).\]
\end{prop}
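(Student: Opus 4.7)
The plan is to find one variety on which both definitions can be compared directly. Let $\pi : \overline{A} \to A$ be the factorizing and log-resolution used in Definition \ref{myadj}, and let $\sigma : \widehat{A} \to \overline{A}$ be the blow-up of the smooth codimension-$c$ subvariety $\overline{X}$. This yields a smooth $\widehat{A}$ with exceptional divisor $\widetilde{E}$ satisfying $K_{\widehat{A}/\overline{A}} = (c-1)\widetilde{E}$ and $\cI_{\overline{X}}\cdot\cO_{\widehat{A}} = \cO_{\widehat{A}}(-\widetilde{E})$. Combining this with the factorizing identity for $\cI_X\cdot\cO_{\overline{A}}$ gives $\cI_X\cdot\cO_{\widehat{A}} = \cO_{\widehat{A}}(-\widetilde{E}-\sigma^{*}R_X)$, which is invertible, so the composite $\widetilde{\pi} = \pi\circ\sigma$ factors through the blow-up $f : A' \to A$ of $X$ via a morphism $g : \widehat{A} \to A'$ with $g^{*}E_0 = \widetilde{E} + \sigma^{*}R_X$, where $E_0$ is the Cartier divisor $f^{-1}X$.

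The next step is to observe that since $\pi$ is an isomorphism at the generic points of the smooth locus of $X$, the strict transform $\overline{X}$ is contained in none of the supports of $R_X$, $K_{\overline{A}/A}$, or $\pi^{-1}Z$. In particular $\sigma^{*}R_X = \sigma_{*}^{-1}R_X$ carries no $\widetilde{E}$-coefficient, and a dimension count then identifies $g_{*}^{-1}E = \widetilde{E}$ for the reduced dominating exceptional divisor $E$ of $f$. Any further minor blow-ups needed to make $\widehat{A}$ satisfy the SNC hypotheses of Definition \ref{takadj} do not change the outcome, thanks to the independence of Takagi's definition from the specific log-resolution $g$. The Def 1 divisor on $\widehat{A}$ then expands, using $K_{\widehat{A}/A} = (c-1)\widetilde{E} + \sigma^{*}K_{\overline{A}/A}$ and $\widetilde{\pi}^{-1}X = \widetilde{E} + \sigma^{*}R_X$ as Cartier divisors, to
\[K_{\widehat{A}/A} - \widetilde{\pi}^{-1}Z - c\,\widetilde{\pi}^{-1}X + g_{*}^{-1}E \;=\; \sigma^{*}\bigl(K_{\overline{A}/A} - \pi^{-1}Z - cR_X\bigr),\]
with the $\widetilde{E}$ coefficients cancelling as $(c-1) - c + 1 = 0$.

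Taking the round-up and extracting the integer divisor $c\sigma^{*}R_X$ rewrites the divisor as $\lceil\sigma^{*}D\rceil - c\sigma^{*}R_X$ where $D = K_{\overline{A}/A} - \pi^{-1}Z$. Since no component of $D$ contains $\overline{X}$, we have $\lceil\sigma^{*}D\rceil = \sigma^{*}\lceil D\rceil$ and hence $\sigma_{*}\cO_{\widehat{A}}(\lceil\sigma^{*}D\rceil) = \cO_{\overline{A}}(\lceil D\rceil)$. Combining this with the projection formula for the line bundle $\cO_{\overline{A}}(-cR_X)$ and then applying $\pi_{*}$ recovers exactly $\adj'_X(A,Z)$, while the independence of the Takagi construction shows the same expression equals $\adj_X(A,Z)$. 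The main obstacle will be the identification $g_{*}^{-1}E = \widetilde{E}$, which requires care when $X$ is not irreducible so that $E$ may be reducible and $f^{-1}X$ may carry multiplicities; the SNC-transversality of $\overline{X}$ to every divisor in sight, which is built into Definition \ref{myadj}, is the essential structural input.
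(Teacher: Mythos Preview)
Your proof is correct and follows essentially the same route as the paper: start from the factorizing resolution $\pi$, blow up the smooth strict transform $\overline{X}$, use the universal property of blowing up to factor through Takagi's blow-up $f:A'\to A$, and then verify the divisor identity using $K_{\widehat{A}/\overline{A}}=(c-1)\widetilde{E}$ together with $\widetilde{\pi}^{-1}X=\widetilde{E}+\sigma^{*}R_X$, concluding via the projection formula and the resolution-independence of Takagi's definition. The paper is terser about the identification $g_{*}^{-1}E=\widetilde{E}$ and about the SNC verification (it asserts that the blow-up of $\overline{X}$ already satisfies the hypotheses of Definition~\ref{takadj} because of the ambient SNC conditions), whereas you allow for further blow-ups and invoke independence; both are fine.
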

\begin{proof} Let $\pi$ be a factorizing resolution as in Definition \ref{myadj} and let $A''$ be the blow-up of $\overline{A}$ along $\overline{X}$. Let $\pi'' : A'' \to \overline{A}$ be the blow-up morphism. Let $\pi'$ be the composition $A'' \to A$. Notice that due to the simple normal crossings hypotheses the composition $A'' \to A$ satisfies the conditions of Definition \ref{takadj}. Let $E$ be the (reduced) union of the exceptional divisors lying above the generic points of the irreducible components of $X$. Since $\pi''$ is a blow-up of smooth centers transverse to the exceptional locus of $\pi$ we compute:
\begin{align*}
(\pi'')^* \cO_{\overline{A}}(\lceil K_{\overline{A}/A} - \pi^{-1} Z - c R_X \rceil)&= \cO_{A''}(\lceil K_{A''/A} - (\pi')^{-1} Z - c R_X \rceil - (c-1) E)\\
&= \cO_{A''}(\lceil K_{A''/A} - (\pi')^{-1} Z  - c(\pi')^{-1}(X) + E \rceil).
\end{align*}
By the universal property of blow-ups, $\pi'$ must factor through the blow-up of $X$ in $A$. But then the divisor we just arrived at computes Takagi's adjoint. We conclude by the projection formula.
\end{proof}

From now on we will conflate the two notations, that is, we will write $\adj'_X(A, Z)$ as $\adj_X(A, Z)$. Note that we already noticed that Takagi has already shown that $\adj_X(A, Z)$ does not depend on the choice of resolution and so our adjoint does not either.

At this stage we will also prove a formula that may seem technical at first but packages the application of the local vanishing theorem that we will use to deduce our restriction theorems.

\begin{lemma}\label{localvanishing} Notation as in Definition \ref{myadj}. Let $D := \lceil K_{\overline{A}/A} - \pi^{-1}(Z) \rceil - cR_X$. We have the usual short exact sequence
\begin{equation}\label{ses}
0 \to \cI_{\overline{X}} \cdot \cO_{\overline{A}}(D) \to \cO_{\overline{A}}(D) \to \cO_{\overline{X}}(D_{|\overline{X}}) \to 0.
\end{equation}
Then
\[R^i\pi_*(\cI_{\overline{X}} \cdot \cO_{\overline{A}}(D)) = 0\]
for all $i > 0$. In particular, if $f$ is the restriction of $\pi$ to $\overline{X}$,
\[\adj_X(A, Z) \cdot \cO_X = f_*(\cO_{\overline{A}}(\lceil K_{\overline{A}/A} - \pi^{-1}(Z) \rceil - cR_X) \cdot \cO_{\overline{X}}).\]
In other words we may restrict first then push forward. Furthermore,
\[\pi_* \cI_{\overline{X}} \cdot \cO_{\overline{A}}(D) = \cJ(A, cX + Z).\]
\end{lemma}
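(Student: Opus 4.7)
The plan is to reduce everything to an application of local vanishing on a genuine log-resolution of $\cI_X \cdot \cI_Z$, obtained by blowing up $\overline{X} \subset \overline{A}$. Let $\mu : A'' \to \overline{A}$ denote this blow-up with exceptional divisor $E$, and set $\pi' := \pi \circ \mu$ and $F := E + \mu^*R_X$. The SNC hypotheses built into Definition \ref{myadj} guarantee that $A''$ is smooth, $\pi'$ is simultaneously a log-resolution of $\cI_X$ and of $Z$, and $\cI_X \cdot \cO_{A''} = \cO_{A''}(-F)$.

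First I would use the factorizing relation $\cI_X \cdot \cO_{\overline{A}} = \cI_{\overline{X}} \cdot \cO_{\overline{A}}(-R_X)$ to rewrite $\cI_{\overline{X}} \cdot \cO_{\overline{A}}(D) = \cI_X \cdot \cO_{\overline{A}}(D + R_X)$ and pull this twisted ideal back under $\mu$. A direct divisor computation using $K_{A''/\overline{A}} = (c-1)E$ yields
\[ \mu^{-1}\bigl(\cI_{\overline{X}} \cdot \cO_{\overline{A}}(D)\bigr) \cdot \cO_{A''} \;=\; \cO_{A''}\bigl(\lceil K_{A''/A} - cF - \mu^*\pi^{-1}Z \rceil\bigr), \]
where matching the coefficient on $E$ requires that $\overline{X}$ meets each component of $\Supp(\pi^{-1}Z)$ either transversely or with multiplicity one, a consequence of the SNC hypothesis. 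By the definition of a multiplier ideal on a log-resolution, the right-hand side is the line bundle whose pushforward via $\pi'$ computes $\cJ(A, cX + Z)$.

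Next I would combine this with the standard blow-up facts $\mu_*\cO_{A''}(-E) = \cI_{\overline{X}}$ and $R^j\mu_*\cO_{A''}(-E) = 0$ for $j > 0$; together with the projection formula and the Leray spectral sequence for $\pi' = \pi \circ \mu$, these identify
\[ R^i\pi_*\bigl(\cI_{\overline{X}} \cdot \cO_{\overline{A}}(D)\bigr) \;=\; R^i\pi'_*\cO_{A''}\bigl(\lceil K_{A''/A} - cF - \mu^*\pi^{-1}Z \rceil\bigr) \]
for every $i \geq 0$. The case $i = 0$ gives the pushforward identity $\pi_*(\cI_{\overline{X}} \cdot \cO_{\overline{A}}(D)) = \cJ(A, cX + Z)$, and the vanishing for $i > 0$ is precisely local vanishing for multiplier ideals (Theorem 9.4.1 of \cite{LazarsfeldPositivityII}) applied to the log-resolution $\pi'$.

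Finally, for the restriction identity I would apply $\pi_*$ to the short exact sequence (\ref{ses}) and use the $R^1$ vanishing above to get a surjection $\adj_X(A, Z) = \pi_*\cO_{\overline{A}}(D) \twoheadrightarrow f_*\cO_{\overline{X}}(D_{|\overline{X}})$. Because the target is annihilated by $\cI_X$, this surjection factors through the natural restriction $\adj_X(A, Z) \twoheadrightarrow \adj_X(A, Z) \cdot \cO_X$, identifying the latter with $f_*\cO_{\overline{X}}(D_{|\overline{X}})$. The main technical obstacle is the divisor matching in the second paragraph — in particular, confirming that the floor operation commutes with $\mu^*$ to the extent needed on the $E$-coefficient — since this is where the factorizing resolution's SNC properties do the essential work.
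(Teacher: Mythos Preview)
Your proof is correct and takes essentially the same approach as the paper: blow up $\overline{X}$ to get $\mu:A''\to\overline{A}$, identify $\cI_{\overline{X}}\cdot\cO_{\overline{A}}(D)$ (after pulling back) with the line bundle computing $\cJ(A,cX+Z)$ on the honest log-resolution $\pi'=\pi\circ\mu$, and then combine local vanishing for $\pi'$ with the vanishing of $R^j\mu_*$ via the Leray spectral sequence. The paper packages the Leray step as a separate lemma and writes the divisor bookkeeping slightly differently (using $(\pi')^{-1}X$ rather than your $F$), but the substance is identical; your worry about the round-up commuting with $\mu^*$ is exactly where the SNC hypothesis enters---no component of $\pi^{-1}Z$ contains $\overline{X}$, so $\mu^*$ contributes no $E$-coefficient there---and the paper passes over this point in the same way.
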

\begin{proof} We calculate as follows. Let $\pi'' : A'' \to \overline{A}$ be the blow-up of $\overline{X}$ with reduced exceptional divisor $E$. Let $\pi'$ be the composition $A'' \to A$. Then
\begin{align*}
(\cI_{\overline{X}} \cdot \cO_{A''}) \cdot (\pi'')^* \cO_{\overline{A}}(\lceil K_{\overline{A}/A} - \pi^{-1} Z \rceil - cR_X)&= \cO_{A''}((\pi'')^*(\lceil K_{\overline{A}/A} - \pi^{-1} Z \rceil - cR_X) - E)\\
&= \cO_{A''}(\lceil K_{A''/A} - (\pi')^{-1} Z - c (\pi')^{-1} X\rceil).
\end{align*}
This has vanishing higher direct images by the local vanishing theorem, see \cite{LazarsfeldPositivityII}, Theorem 9.4.1. Furthermore,
\[\pi_* \cI_{\overline{X}} \cdot \cO_{\overline{A}}(D) = \pi'_*\cO_{A''}(\lceil K_{A''/A} - (\pi')^{-1} Z - c (\pi')^{-1} X\rceil) = \cJ(A, cX + Z).\]
In turn, local vanishing for $\pi''$ implies that
\[R^i\pi''_* (\cO_{A''}(\lceil K_{A''/A} - (\pi')^{-1} Z - c (\pi')^{-1} X\rceil)) = 0\]
for all $i > 0$. We conclude by the following lemma.
\end{proof}

\begin{lemma} Suppose we are given a diagram of proper morphisms
\[\xymatrix{
& X'' \ar[dd]^{f}\ar[dl]_{h}\\
X' \ar[dr]_{g} &\\
& X
}\]
and a coherent sheaf $\cF$ on $X''$. Suppose that $R^j h_* \cF$ and $R^j f_* \cF$ vanish for all $j > 0$. Then $R^j g_* (h_* \cF) = 0$ for all $j > 0$.
\end{lemma}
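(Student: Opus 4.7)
The plan is to apply the Grothendieck--Leray spectral sequence associated to the composition $f = g \circ h$. Since all morphisms are proper and $\cF$ is coherent, this spectral sequence takes the form
\[E_2^{p,q} = R^p g_* \bigl(R^q h_* \cF\bigr) \Longrightarrow R^{p+q} f_* \cF.\]

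By hypothesis $R^q h_* \cF = 0$ for every $q > 0$, so the entire $E_2$ page is concentrated on the row $q = 0$, namely at the terms $E_2^{p,0} = R^p g_*(h_* \cF)$. All differentials on any page involving these terms either originate from or land in a zero group, so the spectral sequence degenerates at $E_2$ and collapses onto this single row. Consequently we get a canonical isomorphism
\[R^p g_*(h_* \cF) \cong R^p f_* \cF\]
for every $p \geq 0$.

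Invoking the second hypothesis, $R^p f_* \cF = 0$ for $p > 0$, we immediately conclude $R^p g_*(h_* \cF) = 0$ for all $p > 0$, as desired.

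There is no real obstacle here; the only thing to make sure of is that the Leray spectral sequence is available in this generality, which it is because all three morphisms are proper and $\cF$ is coherent (so all higher direct images are coherent and the standard construction via injective resolutions or Cartan--Eilenberg resolutions applies without modification). One could equally well phrase the argument in the derived category by writing $Rf_* \cF = Rg_* (Rh_* \cF)$ and using the vanishing of higher $R^q h_* \cF$ to replace $Rh_* \cF$ by $h_* \cF$, but the spectral sequence formulation is the most direct.
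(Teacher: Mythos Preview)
Your argument is correct and follows exactly the same approach as the paper: both use the Leray spectral sequence $R^p g_*(R^q h_*\cF)\Rightarrow R^{p+q}f_*\cF$, observe that the vanishing of $R^q h_*\cF$ for $q>0$ forces degeneration at $E_2$, and then conclude from the vanishing of $R^p f_*\cF$ for $p>0$.
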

\begin{proof} This is an easy consequence of the Leray spectral sequence. Indeed, the Leray spectral sequence has the form
\[R^i g_* (R^j h_* \cF) \Rightarrow R^{i+j} f_* \cF.\]
Since $R^j h_* \cF = 0$ for all $j > 0$ the spectral sequence degenerates at the $E_2$ sheet. Now the assumption that $R^j f_* \cF = 0$ for all $j > 0$ immediately implies the conclusion.
\end{proof}

\section{Jacobian ideals and the adjunction formula in high codimension}\label{secadjunction}

We will first briefly review the relevant notions of lci-defect sheaves. For details see the appendix of \cite{EinMustataJetSchemes}.

Suppose that $X$ is $\Q$-Gorenstein with a Gorenstein index $r$. We get a map
\[(\Omega_X^n)^{\otimes r} \to \cO_X(rK_X)\]
giving by restricting a section on the left to $X \setminus X_\textnormal{sing}$ and extending to a section of $\cO_X(rK_X)$, which is possible since $X$ is normal. Let $\cI_{r,X}$ be the ideal generated by the image of this map. Then $\Jac_X^r \subseteq \cI_{r, X}$ with equality if and only if $X$ is locally a complete intersection. In general there is an ideal sheaf $J_r$ so that
\[\overline{J_r \cdot \cI_{r, X}} = \overline{\Jac_X^r}.\]
This ideal sheaf is called the $r$-th lci-defect sheaf of $X$. It is worthwhile at this point to note that $(J_r)^s \subseteq J_{rs}$ and $\overline{(J_r)^s} = \overline{J_{rs}}$.
Note also that we have adopted the notation of \cite{EinMustataJetSchemes} to emphasize the dependence of the lci-defect sheaves on the Gorenstein index.

We now discuss the adjunction formula that we will use in our proof of Takagi's restriction theorem. We begin with a formula for Jacobian ideals. First we introduce some notation.

\begin{defn} Let $A$ be a matrix of elements of a ring $R$. We denote by $[A]_n$ the ideal of $R$ generated by the $n \times n$-minors of $A$.
\end{defn}

Let $f : Y \to X$ be a morphism of possibly reducible varieties of pure dimension with $Y$ smooth and $\dim(X) = \dim(Y) = n$. Consider the natural map $f^*\Omega_X^n \to \Omega_Y^n$. The image of this map is, by definition, given by $\Jac_f \cdot \Omega_Y^n$. If $X$ is also smooth then, in local coordinates, $\Jac_f$ is just $[df]_n$. We first prove a general lemma regarding Jacobian ideals that can be viewed as a kind of chain rule. It will be useful in the current generality in our investigation of the subadditivity theorem. First, we make a few definitions.

\begin{setup}\label{jacsetup}\ 
\begin{enumerate}
\item[(a)] Denote by $A$ a smooth variety of dimension $N$ and $X$ an equidimensional possibly reducible subvariety of dimension $n$ and codimension $c$. Set $\fa$ to be an ideal sheaf on $A$ contained in the ideal sheaf of $X$. We denote by $\pi : A' \to A$ a birational morphism with $A'$ smooth that is furthermore an isomorphism at every generic point of $X$. Denote by $X'$ the strict transform of $X$ along $\pi$. Suppose that $X'$ is smooth. Let $f : X' \to X$ be the restriction of $\pi$.

\item[(b)] Let $p \in A'$ and let the germ of $\fa$ at $\pi(p)$ be generated by $(h_1, \ldots, h_m)$. Let $w_1, \ldots, w_N$ be local coordinates of $A'$ at $p$ and let $z_1, \ldots, z_N$ be local coordinates of $A$ at $\pi(p)$. We suppose finally that $w_1, \ldots, w_n$ restrict to local coordinates on $X'$ at $p$ and that all other $w_j$ restrict to zero on $X'$.

\item[(c)] To distinguish the two constructions, in the case where $\pi$ is a factorizing resolution for $X$ we will write $\overline{A}$ instead of $A'$ and $\overline{X}$ instead of $X'$.
\end{enumerate}
\end{setup}

With these choices we have the following formula.

\begin{lemma}\label{changeofcoords} Notation as in Setup \ref{jacsetup}. As germs at $p$,
\[\Jac_f \cdot \left(\left[ \frac{\partial (h_i \circ \pi)}{\partial w_j} \right]_c\right) \cdot \cO_{X'} = \left(\Jac_\pi \cdot \left[ \frac{\partial h_i}{\partial z_j}\right]_c \cdot \cO_{A'}\right) \cdot \cO_{X'}.\]
Here $\pi$ need not be factorizing for $X$.
\end{lemma}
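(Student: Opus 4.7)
The plan is to work locally at $p \in X'$ in the coordinates of Setup \ref{jacsetup} and match up generators of the two ideals. First I would identify the local Jacobians. Writing $B = (\partial h_i / \partial z_k)$ and $C = (\partial (z_k \circ \pi)/\partial w_j)$, the chain rule gives $\partial(h_i \circ \pi)/\partial w_j = (BC)_{ij}$. Since $\pi^*\Omega_A^N \to \Omega_{A'}^N$ is multiplication by $\det C$, one has $\Jac_\pi = (\det C) \cdot \cO_{A'}$ locally. For $\Jac_f$, using that $dw_j = 0$ in $\Omega_{X'}^1$ for $j > n$, the image of $f^*\Omega_X^n \to \Omega_{X'}^n$ is generated by
\[\pi^*(dz_{k_1} \wedge \cdots \wedge dz_{k_n})|_{X'} = \det(C_{K, \{1, \ldots, n\}}) \cdot dw_1 \wedge \cdots \wedge dw_n\]
for $n$-subsets $K = \{k_1, \ldots, k_n\}$ of $\{1, \ldots, N\}$. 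Thus $\Jac_f \cdot \cO_{X'}$ is the ideal of $n \times n$ minors of the first $n$ columns of $C$.

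Next I would exploit that $\fa \subseteq \cI_X$ forces $h_i \circ \pi \in \cI_{X'}$. Writing $h_i \circ \pi = \sum_{j > n} w_j g_{ij}$ locally shows $\partial(h_i \circ \pi)/\partial w_j \in \cI_{X'}$ for $j \le n$. So on $X'$ the first $n$ columns of $BC$ vanish, and any $c \times c$ minor of $BC|_{X'}$ whose column set contains an index $\le n$ is zero. Hence $[BC]_c \cdot \cO_{X'}$ is generated by the minors $\det((BC)_{I, \{n+1, \ldots, N\}})|_{X'}$ as $I$ ranges over $c$-subsets of $\{1, \ldots, m\}$.

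The heart of the argument is the identity
\[\pi^*(dz_K \wedge dh_{i_1} \wedge \cdots \wedge dh_{i_c}) = \pi^*(dz_K) \wedge d(h_{i_1} \circ \pi) \wedge \cdots \wedge d(h_{i_c} \circ \pi)\]
in $\Omega_{A'}^N$, for any $n$-subset $K$ and $c$-subset $I = \{i_1, \ldots, i_c\}$. I would compute the coefficient of $dw_1 \wedge \cdots \wedge dw_N$ on both sides using the Cauchy-Binet formula. The left side gives $\pm \det(B_{I, J}) \circ \pi \cdot \det C$ with $J = \{1, \ldots, N\} \setminus K$, which is a typical generator of $\Jac_\pi \cdot [B]_c$. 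The right side expands as $\sum_{|L| = n} \pm \det(C_{K, L}) \cdot \det((BC)_{I, L'})$ with $L' = \{1, \ldots, N\} \setminus L$, and modulo $\cI_{X'}$ every term with $L \neq \{1, \ldots, n\}$ vanishes, since then $L'$ meets $\{1, \ldots, n\}$ and the corresponding column of $(BC)_{I, L'}$ lies in the vanishing first-$n$ block on $X'$. The surviving term is $\pm \det(C_{K, \{1, \ldots, n\}}) \cdot \det((BC)_{I, \{n+1, \ldots, N\}})$, a typical generator of $\Jac_f \cdot [BC]_c \cdot \cO_{X'}$. As $(I, K)$ vary, matched pairs produce full generating sets for both ideals, so the per-pair equalities upgrade to the desired equality of ideals. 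The main obstacle is careful bookkeeping: tracking the Laplace-expansion signs and verifying that variation of $(I, K)$ actually sweeps out generating sets on each side, so that per-pair equalities yield the ideal equality rather than a one-sided inclusion.
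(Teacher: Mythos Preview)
Your proposal is correct and follows essentially the same approach as the paper: both compute the top form $\pi^*(dz_K \wedge dh_I)$ in two ways, obtaining $\pm(\det C)\cdot\det(B_{I,K^c})\circ\pi$ on one side and, after restricting to $X'$ and using that the first $n$ columns of $BC$ vanish there, $\pm\det(C_{K,\{1,\dots,n\}})\cdot\det((BC)_{I,\{n+1,\dots,N\}})$ on the other. The only cosmetic difference is that you phrase the computation in matrix/Cauchy--Binet language while the paper writes out the wedge-product expansion directly; your bookkeeping worry is handled exactly as you suggest, since the bijection $K\leftrightarrow J=K^c$ makes the per-pair equalities sweep out full generating sets on both sides.
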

\begin{proof} If $m < c$ the statement is trivial so let $I$ be a multi-index of type $\binom{N}{n}$ and let $J$ be a multi-index of type $\binom{m}{c}$. Consider the form
\[\omega_{I,J} = d(z_{i_1} \circ \pi) \wedge \cdots \wedge d(z_{i_n} \circ \pi) \wedge d(h_{j_1} \circ \pi) \wedge \cdots \wedge d(h_{j_c} \circ \pi).\]
The form $\omega_{I,J}$ is an element of the module $(\Omega_{A'}^N)_p$. Let $\fb$ be the ideal generated by the $\omega_{I,J}$ for all choices of $I$ and $J$. On the one hand,
\begin{align*}
\omega_{I,J}&= \pi^* (dz_{i_1} \wedge \cdots \wedge dz_{i_n} \wedge dh_{j_1} \wedge \cdots \wedge dh_{j_c})\\
&= \pm \pi^*\left( m_{I^c, J} \cdot (dz_1 \wedge \cdots \wedge dz_N)\right)\\
&= \pm \Jac_\pi \cdot (m_{I^c, J} \circ \pi) \cdot (dw_1 \wedge \cdots \wedge dw_N).
\end{align*}
where $m_{I^c, J}$ is the minor of the matrix of partials $\frac{\partial h_i}{\partial z_j}$ corresponding to the rows $(1, \ldots, N) \setminus I$ and columns $J$. It follows that
\[\fb \cdot \cO_{X'} = \left(\Jac_\pi \cdot \left[ \frac{\partial h_i}{\partial z_j}\right]_c \cdot \cO_{A'} \right) \cdot \cO_{X'}.\]

Now, observe that, for any $i$ we have
\[d(h_i \circ \pi)_{|X'} = d((h_i \circ \pi)_{|X'}) = 0\]
since the $h_i$ vanish on $X$. On the other hand,
\[d(h_i \circ \pi)_{|X'} = \sum_{j = 1}^N \left(\frac{\partial (h_i \circ \pi)}{\partial w_j}\right)_{|X'} d(w_{j|X'}).\]
We chose $w_{1|X'}, \ldots, w_{n|X'}$ to be local coordinates on $X'$ at $p$, so the $d(w_{j|X'})$ are linearly independent for $1 \leq j \leq n$ while the rest are zero. It follows that we must have
\[\left(\frac{\partial (h_i \circ \pi)}{\partial w_j}\right)_{|X'} = 0\]
for $1 \leq j \leq n$.

Now, if we define $\pi_i = z_i \circ \pi$, then
\begin{align*}
\omega_{I,J} = &\left( \sum_{S \text{ type } \binom{N}{n}} \left(\frac{\partial\pi_{i_1}}{\partial w_{s_1}} \cdots \frac{\partial\pi_{i_n}}{\partial w_{s_n}}\right) \cdot dw_{s_1} \wedge \cdots \wedge dw_{s_n}\right) \wedge\\
&\left( \sum_{T \text{ type } \binom{N}{c}} \left(\frac{\partial (h_{j_1} \circ \pi)}{\partial w_{t_1}} \cdots \frac{\partial (h_{j_c} \circ \pi)}{\partial w_{t_c}}\right) \cdot dw_{t_1} \wedge \cdots \wedge dw_{t_c}\right).
\end{align*}
By our calculation of the derivatives of $h_i \circ \pi$, the terms
\[\left(\frac{\partial (h_{j_1} \circ \pi)}{\partial w_{t_1}} \cdots \frac{\partial (h_{j_c} \circ \pi)}{\partial w_{t_c}}\right)_{|X'}\]
are non-zero only if $T = (n+1, \ldots, N)$. It follows that
\[\fb \cdot \cO_{X'} = \left([d\pi]_{n, (1, \ldots, n)} \cdot \left[ \frac{\partial (h_i \circ \pi)}{\partial w_j} \right]_c\right) \cdot \cO_{X'}\]
where $[d\pi]_{n, (1, \ldots, n)}$ is the ideal of $n \times n$-minors of $d\pi$ with the choice of columns (here the columns give the variables that we differentiate with respect to) equal to $(1, \ldots, n)$. But, since $w_1, \ldots, w_n$ were chosen to restrict to the local coordinates of $X'$ and $w_{n+1}, \ldots, w_N$ were chosen to restrict to zero on $X'$ it is immediate that
\[([d\pi]_{n, (1, \ldots, n)}) \cdot \cO_{X'} = \Jac_f.\]
\end{proof}

The following lemma is essentially the adjunction formula that we will use to deduce Takagi's restriction theorem.

\begin{lemma}\label{jacadj} Notation as in (\ref{jacsetup}). Suppose furthermore that $\pi$ is a factorizing resolution\footnote{Recall that, in Setup \ref{jacsetup}, we agreed to use $\overline{A}$ and $\overline{X}$ in the notation for factorizing resolutions.} of $X$. Write
\[\cI_X \cdot \cO_{\overline{A}} = \cI_{\overline{X}} \cdot \cO_{\overline{A}}(-R_X).\]
Then
\[(\Jac_\pi \cdot \cO_{\overline{X}}) \cdot  (\Jac_X \cdot \cO_{\overline{X}}) = \Jac_f \cdot (\cO_{\overline{A}}(-cR_X)) \cdot \cO_{\overline{X}}.\]
\end{lemma}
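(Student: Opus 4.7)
The plan is to apply Lemma \ref{changeofcoords} with $\fa = \cI_X$, then separately analyze both sides of the resulting identity using the factorizing property of $\pi$.

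First I would set up local generators $h_1, \ldots, h_m$ of $\cI_X$ near $\pi(p)$, with local coordinates $z_1, \ldots, z_N$ on $A$ and $w_1, \ldots, w_N$ on $\overline{A}$ as in Setup \ref{jacsetup}. Lemma \ref{changeofcoords} then gives the identity
\[
\Jac_f \cdot \left[\frac{\partial(h_i \circ \pi)}{\partial w_j}\right]_c \cdot \cO_{\overline{X}} = \left(\Jac_\pi \cdot \left[\frac{\partial h_i}{\partial z_j}\right]_c \cdot \cO_{\overline{A}}\right) \cdot \cO_{\overline{X}}.
\]
On the right-hand side of this identity, since $X$ has codimension $c$ in the smooth variety $A$ and the $h_i$ vanish on $X$, the ideal $[\partial h_i/\partial z_j]_c \cdot \cO_X$ is precisely $\Jac_X$, so the right-hand side equals $(\Jac_\pi \cdot \cO_{\overline{X}}) \cdot (\Jac_X \cdot \cO_{\overline{X}})$. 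This handles one side cleanly.

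The substantive step is to show that
\[
\left[\frac{\partial(h_i \circ \pi)}{\partial w_j}\right]_c \cdot \cO_{\overline{X}} = \cO_{\overline{A}}(-cR_X) \cdot \cO_{\overline{X}}.
\]
Because $\pi$ is factorizing, $\cI_X \cdot \cO_{\overline{A}} = \cI_{\overline{X}} \cdot \cO_{\overline{A}}(-R_X)$, so locally I can pick an equation $s$ for $R_X$ and write $h_i \circ \pi = s \cdot g_i'$ where the $g_i'$ generate $\cI_{\overline{X}}$. Differentiating gives
\[
\frac{\partial(h_i \circ \pi)}{\partial w_j} = \frac{\partial s}{\partial w_j}\, g_i' + s\, \frac{\partial g_i'}{\partial w_j},
\]
and because $g_i' \in \cI_{\overline{X}}$ the first term vanishes on $\overline{X}$. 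Hence the restriction of the matrix $\bigl(\partial(h_i\circ\pi)/\partial w_j\bigr)$ to $\overline{X}$ equals $s$ times the matrix $\bigl(\partial g_i'/\partial w_j\bigr)_{|\overline{X}}$, so its ideal of $c \times c$ minors restricted to $\overline{X}$ is $s^c \cdot [\partial g_i'/\partial w_j]_c \cdot \cO_{\overline{X}}$, and $s^c$ generates $\cO_{\overline{A}}(-cR_X)$ locally.

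The remaining point — and the main obstacle — is showing that $[\partial g_i'/\partial w_j]_c \cdot \cO_{\overline{X}} = \cO_{\overline{X}}$. This is the fact that the Jacobian ideal of the smooth subvariety $\overline{X} \subseteq \overline{A}$ is the unit ideal. Concretely, using the coordinates from Setup \ref{jacsetup}, $\cI_{\overline{X}}$ is locally generated by $(w_{n+1}, \ldots, w_N)$, so I can write $g_i' = \sum_{k=n+1}^N a_{ik} w_k$. Then for $j \in \{n+1,\ldots,N\}$, the restriction to $\overline{X}$ of $\partial g_i'/\partial w_j$ is $a_{ij|\overline{X}}$. Because the $g_i'$ generate $\cI_{\overline{X}}$, the matrix $(a_{ij|\overline{X}})$ is surjective onto $\cO_{\overline{X}}^c$, so some $c \times c$ minor is a unit on $\overline{X}$, giving $[\partial g_i'/\partial w_j]_c \cdot \cO_{\overline{X}} = \cO_{\overline{X}}$. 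Substituting back completes the argument.
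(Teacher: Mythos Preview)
Your proof is correct and follows essentially the same route as the paper's: apply Lemma~\ref{changeofcoords} with $\fa=\cI_X$, identify the right-hand side with $(\Jac_\pi\cdot\cO_{\overline{X}})\cdot(\Jac_X\cdot\cO_{\overline{X}})$, factor $h_i\circ\pi$ as a local generator of $\cO_{\overline{A}}(-R_X)$ times an element of $\cI_{\overline{X}}$, differentiate, and use smoothness of $\overline{X}$ to see that the remaining minor ideal is the unit ideal. The only difference is that you spell out the last step explicitly via coordinates, whereas the paper simply writes $g^c\cdot\Jac_{\overline{X}}=(g^c)$ ``since $\overline{X}$ is smooth.''
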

\begin{proof} We apply the previous lemma. Choose $\fa$ to be the ideal of $X$. Suppose that, at $p$, $I = (h_1, \ldots, h_m)$. We get that
\[\Jac_f \cdot \left(\left[ \frac{\partial (h_i \circ \pi)}{\partial w_j} \right]_c\right) \cdot \cO_{\overline{X}} = \left(\Jac_\pi \cdot \left[ \frac{\partial h_i}{\partial z_j}\right]_c \cdot \cO_{\overline{A}} \right) \cdot \cO_{\overline{X}}.\]
By definition,
\[\left(\left[ \frac{\partial h_i}{\partial z_j}\right]_c\right) \cdot \cO_X = \Jac_X.\]
Now, as germs at $p$ we may write
\[\cI_X \cdot \cO_{\overline{A}} = (h_1 \circ \pi, \ldots, h_m \circ \pi) = (g\overline{h}_1, \ldots, g\overline{h}_m)\]
with $g$ a local generator of $\cO_{\overline{A}}(-R_X)$. But
\[\frac{\partial (h_i \circ \pi)}{\partial w_j} = \frac{\partial (g\overline{h}_i)}{\partial w_j} = g\frac{\partial (\overline{h}_i)}{\partial w_j} + \overline{h_i}\frac{\partial g}{\partial w_j}.\]
Since $\overline{h}_i$ restricts to zero on $\overline{X}$, we see that
\[\left(\left[ \frac{\partial (h_i \circ \pi)}{\partial w_j} \right]_c\right) \cdot \cO_{\overline{X}} = g^c \cdot \Jac_{\overline{X}} = (g^c)\]
since $\overline{X}$ is smooth.
\end{proof}

The next step is to interpret the various Jacobian ideals that appear in this formula in terms of relative canonical classes.

\begin{lemma}\label{adjunction2} Let $X$ be a $\Q$-Gorenstein possibly reducible variety and let $f : \overline{X} \to X$ be a birational morphism with $\overline{X}$ smooth. Let $r$ be a Gorenstein index of $X$. Then
\[\Jac_f^r = (\cI_{r,X} \cdot \cO_{\overline{X}}) \cdot \cO_{\overline{X}}(-rK_{\overline{X}/X}).\] 
\end{lemma}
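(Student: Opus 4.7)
The plan is to verify the equality locally at a point $p \in \overline{X}$ with image $q = f(p) \in X$, by expressing both sides in a single local trivialization and comparing them as principal (possibly fractional) ideals. I begin by choosing a local generator $\sigma$ of the line bundle $\cO_X(rK_X)$ on a neighborhood $V$ of $q$. Because $X$ is normal (being $\Q$-Gorenstein) and the defining map $\phi : (\Omega_X^n)^{\otimes r} \to \cO_X(rK_X)$ is an isomorphism over the smooth locus, the preimage $\phi^{-1}(\sigma)$ extends uniquely to a rational $r$-fold $n$-form $\Sigma$ on $V$. By the very definition of $\cI_{r,X}$, every section $\omega$ of $(\Omega_X^n)^{\otimes r}$ near $q$ can be written as $\omega = a\Sigma$ (as rational forms) with $a \in \cO_{X,q}$, and the coefficients $a$ generate the stalk $\cI_{r,X,q}$ as $\omega$ varies.

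Next I pick local coordinates $w_1, \ldots, w_n$ on $\overline{X}$ at $p$ and write $f^*\Sigma = F\cdot (dw_1\wedge\cdots\wedge dw_n)^{\otimes r}$ for a rational function $F$ on $\overline{X}$. Pulling back the identity $\omega = a\Sigma$ and applying the natural map $f^*(\Omega_X^n)^{\otimes r} \to (\Omega_{\overline{X}}^n)^{\otimes r}$, one finds that $f^*\omega$ has image $(a\circ f)\cdot F\cdot (dw)^{\otimes r}$ in the trivialization by $(dw)^{\otimes r}$. Since $\Jac_f^r \cdot (\Omega_{\overline{X}}^n)^{\otimes r}$ is, by definition, precisely this image on the $r$-fold tensor power, the elements $(a\circ f)F$ generate $\Jac_f^r$ as $\omega$ ranges over local sections of $(\Omega_X^n)^{\otimes r}$ near $q$.

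The final step is to identify $F$ as a local generator of $\cO_{\overline{X}}(-rK_{\overline{X}/X})$. The construction above exhibits a canonical map of line bundles $f^*\cO_X(rK_X) \to \cO_{\overline{X}}(rK_{\overline{X}})$, well-defined because it agrees with the isomorphism obtained from $\phi$ and the natural pullback of forms on the dense open subset of $\overline{X}$ mapping to $X_{\textnormal{smooth}}$. The comparison divisor of this map is precisely $rK_{\overline{X}/X} = rK_{\overline{X}} - f^*(rK_X)$, so $F$ generates the invertible $\cO_{\overline{X}}$-module $\cO_{\overline{X}}(-rK_{\overline{X}/X})$ locally at $p$ (a fractional ideal when $rK_{\overline{X}/X}$ fails to be effective; the products $(a\circ f)F$ nevertheless lie in $\cO_{\overline{X}}$ since they arise as coefficients of regular sections). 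Combining the two generation statements gives the claimed equality
\[\Jac_f^r = (\cI_{r,X}\cdot \cO_{\overline{X}})\cdot \cO_{\overline{X}}(-rK_{\overline{X}/X}).\]
The main technical hurdle is keeping the signs straight: the direction of the canonical line-bundle comparison (forced by agreement on the smooth locus, where $\phi$ is invertible) is what produces the twist by $+rK_{\overline{X}/X}$ rather than its negative, and one must verify that the arrow $f^*\cO_X(rK_X) \to \cO_{\overline{X}}(rK_{\overline{X}})$ is genuinely induced by the commutative square of pullback maps rather than only rationally defined.
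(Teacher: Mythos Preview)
Your proof is correct and follows the same idea as the paper: factor the natural map $f^*(\Omega_X^n)^{\otimes r} \to (\Omega_{\overline{X}}^n)^{\otimes r}$ through $f^*\cO_X(rK_X)$ and compare the images. The paper's only wrinkle is to split $K_{\overline{X}/X} = K^+ - K^-$ into effective parts and twist by $\cO_{\overline{X}}(-rK^-)$ so that every arrow is an honest morphism of sheaves rather than a rational one, but your fractional-ideal bookkeeping accomplishes the same thing (and in particular your worry at the end about the arrow $f^*\cO_X(rK_X)\to\cO_{\overline{X}}(rK_{\overline{X}})$ being ``genuinely induced'' is moot, since you only use that $F$ locally generates the fractional ideal $\cO_{\overline{X}}(-rK_{\overline{X}/X})$).
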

\begin{proof} Write
\[K_{\overline{X}} + K^- = f^* K_X + K^+\]
with $K^+$, $K^-$ effective. We get a map
\[f^* \cO_X(rK_X) \otimes \cO_{\overline{X}}(-rK^-) \to \cO_{\overline{X}}(r(K_{\overline{X}})).\]
The image of this map is given by $I \cdot \cO_{\overline{X}}(rK_{\overline{X}})$ where $I$ is the ideal $\cO_{\overline{X}}(-rK^+)$. Next we have a commutative diagram
\[\xymatrix{
f^* (\Omega_X^n)^{\otimes r} \otimes \cO_{\overline{X}}(-rK^-) \ar[r]\ar[dr] & \cO_{\overline{X}}(rK_{\overline{X}}) \\
& f^* \cO_{\overline{X}}(rK_X) \otimes \cO_{\overline{X}}(-rK^-). \ar[u]
}\]
By computing the images of these maps we see that
\[\Jac_f^r \cdot \cO_{\overline{X}}(-rK^-) = (\cI_{r,X} \cdot \cO_{\overline{X}}) \cdot \cO_{\overline{X}}(-rK^+).\]
The required statement follows by rearranging this equation.
\end{proof}

We can now finally prove our adjunction formula.

\begin{thm}\label{adjunction} Let $A$ be a smooth variety and let $X$ be a generically smooth equidimensional subscheme. Let $\pi : \overline{A} \to A$ be a factorizing resolution of $X$ inside $A$ and let $f$ be the restriction of $\pi$ to $\overline{X}$, the strict transform of $X$ along $\pi$. Write
\[\cI_X \cdot \cO_{\overline{A}} = \cI_{\overline{X}} \cdot \cO_{\overline{A}}(-R_X).\]
Suppose that $X$ is $\Q$-Gorenstein with a Gorenstein index $r$. Suppose further that $f$ is a log-resolution of $\cI_{r,X}$ and $J_r$. Let $D$ be the divisor defined by
\[J_r \cdot \cO_{\overline{X}} = \cO_{\overline{X}}(-D).\]
Then
\[K_{\overline{X}/X} - \frac{1}{r}D = (K_{\overline{A}/A} - cR_X)_{|\overline{X}}\]
with equality being equality of $\Q$-divisors on $\overline{X}$.
\end{thm}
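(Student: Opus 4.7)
The plan is to combine Lemma \ref{jacadj} (a pointwise Jacobian identity at a factorizing resolution), Lemma \ref{adjunction2} (the relative-canonical interpretation of $\Jac_f^r$), and the defining property $\overline{J_r \cdot \cI_{r,X}} = \overline{\Jac_X^r}$ of the lci-defect sheaf. Since the desired conclusion is an equality of $\Q$-divisors on $\overline{X}$, my strategy is to pass everything to ideal sheaves on $\overline{X}$, raise Lemma \ref{jacadj} to the $r$-th power so that Lemma \ref{adjunction2} applies, and then read off the divisorial expression.

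First, I would raise the identity of Lemma \ref{jacadj} to the $r$-th power to obtain, on $\overline{X}$,
\[(\Jac_\pi^r \cdot \cO_{\overline{X}}) \cdot (\Jac_X^r \cdot \cO_{\overline{X}}) = \Jac_f^r \cdot \cO_{\overline{A}}(-crR_X) \cdot \cO_{\overline{X}}.\]
Because $A$ is smooth, $\Jac_\pi \cdot \cO_{\overline{A}} = \cO_{\overline{A}}(-K_{\overline{A}/A})$, and by Lemma \ref{adjunction2}, $\Jac_f^r = (\cI_{r,X} \cdot \cO_{\overline{X}}) \cdot \cO_{\overline{X}}(-rK_{\overline{X}/X})$. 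Substituting these expressions and rearranging yields a presentation of $\Jac_X^r \cdot \cO_{\overline{X}}$ as $\cI_{r,X} \cdot \cO_{\overline{X}}$ tensored with an explicit line bundle; in particular, $\Jac_X^r \cdot \cO_{\overline{X}}$ is invertible.

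Next I would identify $\Jac_X^r \cdot \cO_{\overline{X}}$ using the defining relation $\overline{\Jac_X^r} = \overline{J_r \cdot \cI_{r,X}}$. Because $f$ is assumed to log-resolve both $\cI_{r,X}$ and $J_r$, the ideal $(J_r \cdot \cI_{r,X}) \cdot \cO_{\overline{X}}$ equals $\cO_{\overline{X}}(-D) \cdot (\cI_{r,X} \cdot \cO_{\overline{X}})$ and is thus invertible. Two ideals on $X$ whose integral closures agree and whose expansions to the smooth scheme $\overline{X}$ are both invertible must be equal after pullback, since each is then determined by its orders of vanishing along prime divisors of $\overline{X}$. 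Hence $\Jac_X^r \cdot \cO_{\overline{X}} = \cO_{\overline{X}}(-D) \cdot (\cI_{r,X} \cdot \cO_{\overline{X}})$. Equating this with the expression obtained above, cancelling the common invertible factor $\cI_{r,X} \cdot \cO_{\overline{X}}$, and dividing the resulting divisorial identity by $r$ produces the claimed formula.

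The main obstacle is the integral-closure comparison: I need to argue carefully that passing to $\overline{X}$ converts the identity $\overline{\Jac_X^r} = \overline{J_r \cdot \cI_{r,X}}$ on $X$ into a genuine equality of ideal sheaves. This relies on the log-resolution hypothesis to make $J_r \cdot \cI_{r,X}$ invertible after pullback, together with the invertibility of $\Jac_X^r \cdot \cO_{\overline{X}}$ derived in the first step, so that each side equals its own integral closure on the normal scheme $\overline{X}$. The rest of the argument is then formal manipulation of line bundles on $\overline{X}$.
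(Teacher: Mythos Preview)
Your proposal is correct and is exactly the argument the paper has in mind: the paper's proof consists of the single sentence ``This follows easily from the previous two lemmas and the definition of $J_r$,'' and your write-up simply unpacks that sentence by raising Lemma~\ref{jacadj} to the $r$-th power, substituting Lemma~\ref{adjunction2} and $\Jac_\pi=\cO_{\overline{A}}(-K_{\overline{A}/A})$, and then using the log-resolution hypothesis to turn the integral-closure identity $\overline{J_r\cdot\cI_{r,X}}=\overline{\Jac_X^r}$ into an equality of invertible pullbacks on $\overline{X}$. The integral-closure step you flagged is fine: once both pullbacks are invertible on the smooth $\overline{X}$ they equal their own integral closures, hence coincide.
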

\begin{proof} This follows easily from the previous two lemmas and the definition of $J_r$.
\end{proof}

Note that the necessary $\pi$, that is, a log-resolution of $\cI_{r,X}$ and $J_r$ can always be found by Lemma \ref{sfr}. Lastly we record the following easy fact that will be useful for the subadditivity theorem.

\begin{lemma}\label{jaccomposition} Let $f_1 : A \to B$ and $f_2 : B \to C$ be birational morphisms and let $f = f_2 \circ f_1$. Suppose that $A$ and $B$ are smooth. Then
\[\Jac_f = (\Jac_{f_2} \cdot \cO_A) \cdot \Jac_{f_1}\]
\end{lemma}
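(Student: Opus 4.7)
The plan is to chase the definitions through the evident factorization of the natural map on top differentials. Writing $n = \dim A = \dim B = \dim C$, the map whose image defines $\Jac_f$ factors as
\[
f^*\Omega^n_C \;=\; f_1^* f_2^* \Omega^n_C \;\xrightarrow{f_1^*\phi_2}\; f_1^* \Omega^n_B \;\xrightarrow{\phi_1}\; \Omega^n_A,
\]
where $\phi_i$ denotes the natural map of top forms. By the definition of Jacobian ideals recalled in Section \ref{secadjunction}, the image of $\phi_i$ is $\Jac_{f_i}$ times the target line bundle, and the image of the composite is $\Jac_f \cdot \Omega^n_A$. It suffices, therefore, to compute the image of the composite in two stages.

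First I would identify the image of $f_1^*\phi_2$. Since $B$ is smooth, $\Omega^n_B$ is invertible, and the surjection $f_2^*\Omega^n_C \twoheadrightarrow \Jac_{f_2}\cdot \Omega^n_B$ fits into a short exact sequence whose cokernel is $(\cO_B/\Jac_{f_2})\otimes \Omega^n_B$. Applying $f_1^*$ (right-exact) and using that pullback commutes with tensoring by a line bundle identifies the image of $f_1^*\phi_2$ with $(\Jac_{f_2}\cdot \cO_A)\cdot f_1^*\Omega^n_B$.

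Next I would apply $\phi_1$ to this submodule. Because $\phi_1$ is $\cO_A$-linear with image $\Jac_{f_1}\cdot \Omega^n_A$, restricting $\phi_1$ to the subsheaf $(\Jac_{f_2}\cdot \cO_A)\cdot f_1^*\Omega^n_B$ produces the image $(\Jac_{f_2}\cdot \cO_A)\cdot \Jac_{f_1}\cdot \Omega^n_A$. Comparing this to $\Jac_f\cdot \Omega^n_A$ inside the line bundle $\Omega^n_A$ (and cancelling the line bundle, which is legitimate since $A$ is smooth) yields the required equality.

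I do not anticipate a real obstacle: the statement is, at bottom, the multiplicativity of determinants in the chain rule $d(f_2\circ f_1) = (df_2)\circ (df_1)$. The one point that requires attention is that $C$ may be singular, so $\Omega^n_C$ need not be locally free; this is precisely why the argument proceeds by computing images inside the line bundles $\Omega^n_B$ and $\Omega^n_A$ rather than manipulating $\Omega^n_C$ directly, and it is the smoothness of $A$ and $B$ that makes both steps of the image computation clean.
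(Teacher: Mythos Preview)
Your proof is correct and follows essentially the same approach as the paper: both factor the natural map $f^*\Omega_C^n \to \Omega_A^n$ through $f_1^*\Omega_B^n$ and exploit that $\Omega_A^n$ and $\Omega_B^n$ are line bundles since $A$ and $B$ are smooth. The paper is terser---it simply notes that the second map is multiplication by a local generator of $\Jac_{f_1}$ and says the formula follows---while you supply the right-exactness argument to identify the image of $f_1^*\phi_2$; this extra care is fine but not a different route.
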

\begin{proof} Let $n$ be the dimension of the varieties involved. Consider the composition of natural maps
\[f^* \Omega_C^n \to f_2^*\Omega_B^n \to \Omega_A^n.\]
The composition is $df$ and the formula follows easily since $\Omega_B^n$ and $\Omega_A^n$ are line bundles and the second morphism is just multiplication by a generator of $\Jac_{f_1}$.
\end{proof}

\section{Takagi's restriction theorem}

The tools developed so far enable us to give a quick proof of a stronger form of the restriction theorem given by Takagi in his paper \cite{TakagiAdjoint}.

\begin{thm}\label{Takagirestriction} Let $X \subseteq A$ be a $\Q$-Gorenstein (in particular, reduced) equidimensional subscheme of an ambient smooth variety with a Gorenstein index $r$ and codimension $c$. Let $Z$ be an $\R_{>0}$-linear combination of subschemes of $A$, not containing any component of $X$ in their support. Then there exists a short exact sequence
\[0 \to \cJ(A, cX + Z) \to \adj_X(A,Z) \to \cJ\left(X, \frac{1}{r}\V(J_r) + Z_{|X}\right) \to 0\]
with the first map given by inclusion and the last map given by restriction to $X$.
\end{thm}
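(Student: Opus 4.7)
The plan is to produce the short exact sequence by pushing forward the natural sequence on a well-chosen factorizing resolution and identifying each term using the adjoint definition, Lemma \ref{localvanishing}, and the adjunction formula of Theorem \ref{adjunction}.

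First, invoke Corollary \ref{sfrcor} (via Lemma \ref{sfr}) to pick a single birational morphism $\pi : \overline{A} \to A$ which is simultaneously a factorizing resolution of $X \subseteq A$ and a log-resolution of $Z$, of $\cI_{r,X}$, and of $J_r$. Let $\overline{X}$ be the strict transform, $f = \pi_{|\overline{X}}$, and $R_X$ the divisor with $\cI_X \cdot \cO_{\overline{A}} = \cI_{\overline{X}}\cdot\cO_{\overline{A}}(-R_X)$. Set
\[D_0 := \lceil K_{\overline{A}/A} - \pi^{-1}(Z) \rceil - cR_X,\]
so that by Definition \ref{myadj} we have $\adj_X(A,Z) = \pi_*\cO_{\overline{A}}(D_0)$.

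Next, take the standard restriction sequence (\ref{ses}) from Lemma \ref{localvanishing} and push it forward by $\pi$. The lemma gives $R^1\pi_*(\cI_{\overline{X}}\cdot\cO_{\overline{A}}(D_0)) = 0$, hence the resulting sequence
\[0 \to \pi_*(\cI_{\overline{X}}\cdot\cO_{\overline{A}}(D_0)) \to \pi_*\cO_{\overline{A}}(D_0) \to f_*\cO_{\overline{X}}(D_{0|\overline{X}}) \to 0\]
is short exact. The same lemma identifies the leftmost term with $\cJ(A, cX + Z)$ and the middle term is $\adj_X(A,Z)$ by construction, so the left two entries of the desired sequence are accounted for, with the first map being inclusion and the surjection onto the quotient being restriction to $\overline{X}$.

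The main task is therefore to identify $f_*\cO_{\overline{X}}(D_{0|\overline{X}})$ with $\cJ(X,\tfrac{1}{r}\V(J_r) + Z_{|X})$. Using the simple normal crossings hypotheses built into the factorizing resolution, neither $\pi^{-1}(Z)$ nor $R_X$ contain $\overline{X}$ in their support, so rounding commutes with restriction and one has
\[D_{0|\overline{X}} = \bigl\lceil (K_{\overline{A}/A} - cR_X)_{|\overline{X}} - (\pi^{-1}Z)_{|\overline{X}} \bigr\rceil.\]
By Theorem \ref{adjunction}, $(K_{\overline{A}/A} - cR_X)_{|\overline{X}} = K_{\overline{X}/X} - \tfrac{1}{r}D$, where $D$ is the divisor with $J_r\cdot\cO_{\overline{X}} = \cO_{\overline{X}}(-D)$; hence $\tfrac{1}{r}D = f^{-1}(\tfrac{1}{r}\V(J_r))$. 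Moreover, since each piece $\V(\fa_i)$ of $Z$ satisfies $\fa_i\cdot\cO_{\overline{A}} = \cO_{\overline{A}}(-F_i)$ with $F_i$ not containing $\overline{X}$, restriction gives $\fa_i\cdot\cO_{\overline{X}} = \cO_{\overline{X}}(-F_{i|\overline{X}})$, so $(\pi^{-1}Z)_{|\overline{X}} = f^{-1}(Z_{|X})$. Combining these identifications yields
\[D_{0|\overline{X}} = \bigl\lceil K_{\overline{X}/X} - f^{-1}\bigl(\tfrac{1}{r}\V(J_r) + Z_{|X}\bigr)\bigr\rceil,\]
and because $f : \overline{X} \to X$ is a log-resolution of $\tfrac{1}{r}\V(J_r) + Z_{|X}$ (inherited from the simple normal crossings conditions of the factorizing resolution together with the smoothness of $\overline{X}$), the pushforward $f_*\cO_{\overline{X}}(D_{0|\overline{X}})$ is precisely $\cJ(X, \tfrac{1}{r}\V(J_r) + Z_{|X})$ by definition.

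The main obstacle is the careful bookkeeping at the restriction step: verifying that the rounding commutes with $(-)_{|\overline{X}}$, that $\pi^{-1}Z$ genuinely restricts to $f^{-1}(Z_{|X})$, and that $f$ qualifies as a log-resolution of the boundary $\tfrac{1}{r}\V(J_r) + Z_{|X}$ on $X$. All three hinge on the simple normal crossings hypotheses baked into the factorizing resolution of Lemma \ref{sfr}, so once the resolution is set up correctly at the outset, the rest of the argument is essentially a bookkeeping reduction to Theorem \ref{adjunction} and Lemma \ref{localvanishing}.
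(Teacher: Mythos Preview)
Your proposal is correct and follows essentially the same route as the paper's proof: choose a factorizing resolution compatible with $Z$, $\cI_{r,X}$, and $J_r$, push forward the restriction sequence of Lemma \ref{localvanishing}, and use Theorem \ref{adjunction} to identify the quotient with the multiplier ideal on $X$. Your write-up is in fact a bit more explicit than the paper's about the bookkeeping (rounding commuting with restriction, $(\pi^{-1}Z)_{|\overline{X}} = f^{-1}(Z_{|X})$, and $f$ being a log-resolution), but the argument is the same.
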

\begin{proof} Let $\pi : \overline{A} \to A$ be a factorizing resolution as in Definition \ref{myadj} and in Theorem \ref{adjunction}. Lemma \ref{localvanishing} gives the short exact sequence
\[0 \to \cI_{\overline{X}} \cdot \cO_{\overline{A}}(D) \to \cO_{\overline{A}}(D) \to \cO_{\overline{X}}(D_{|\overline{X}}) \to 0\]
where $D := \lceil K_{\overline{A}/A} - \pi^{-1}(Z) \rceil - cR_X$. The same lemma states that
\[R^i\pi_*(\cI_{\overline{X}} \cdot \cO_{\overline{A}}(D)) = 0\]
for all $i > 0$. We have already seen in Lemma \ref{localvanishing} that
\[\pi_* (\cI_{\overline{X}} \cdot \cO_{\overline{A}}(D)) = \cJ(A, cX + Z)\]
and
\[\pi_* \cO_{\overline{A}}(D) = \adj_X(A, Z)\]
by definition. We must only check that
\[f_* \cO_{\overline{X}}(D_{|\overline{X}}) = \cJ\left(X, \frac{1}{r}\V(J_r) + Z_{|X}\right).\]
This follows immediately from the following expresssion
\begin{equation}\label{appliedadj}
\cO_{\overline{X}}(D_{|\overline{X}}) = \cO_{\overline{X}}\left(\left\lceil K_{\overline{X}/X} - \frac{1}{r}\pi^{-1}(J_r) - \pi^{-1}(Z_{|X}) \right\rceil\right)
\end{equation}
which itself follows immediately from the formula of Theorem \ref{adjunction}.
\end{proof}

The following form of our restriction theorem answers a question of Takagi's in \cite{TakagiAdjoint}, Remark 3.2, (3).

\begin{cor} In the situation of the theorem we have the formulas
\[\adj_X(A,Z) \cdot \cO_X = \cJ\left(X, \frac{1}{r}\V(J_r) + Z_{|X}\right),\ \cJ\left(X, \frac{1}{r}\V(J_r) + Z_{|X}\right) \subseteq \cJ(A,Z) \cdot \cO_X.\]
\end{cor}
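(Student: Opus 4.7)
The strategy is to extract both assertions directly from the short exact sequence of Theorem \ref{Takagirestriction}, which has just been established. For the first equality, the plan is to observe that the right-hand surjection in the sequence is, by construction of that map in the proof of the theorem, induced by the restriction map $\cO_A \twoheadrightarrow \cO_X$ applied to sections of $\adj_X(A,Z)$. Since the image of $\adj_X(A,Z) \subseteq \cO_A$ under this surjection is precisely the ideal $\adj_X(A,Z) \cdot \cO_X$ of $\cO_X$, exactness of the sequence gives
\[\adj_X(A,Z) \cdot \cO_X = \cJ\left(X, \frac{1}{r}\V(J_r) + Z_{|X}\right).\]

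For the second assertion, I would combine this equality with the containment $\adj_X(A,Z) \subseteq \cJ(A,Z)$. To establish the latter, I would work on a factorizing resolution $\pi : \overline{A} \to A$ as in Definition \ref{myadj} and observe that the divisor $R_X$ defined by $\cI_X \cdot \cO_{\overline{A}} = \cI_{\overline{X}} \cdot \cO_{\overline{A}}(-R_X)$ is effective; this is because the full scheme-theoretic preimage of $X$ contains the strict transform $\overline{X}$, so $\cI_X \cdot \cO_{\overline{A}} \subseteq \cI_{\overline{X}}$, forcing $\cO_{\overline{A}}(-R_X) \subseteq \cO_{\overline{A}}$. Consequently $\lceil K_{\overline{A}/A} - \pi^{-1}(Z) \rceil - cR_X \leq \lceil K_{\overline{A}/A} - \pi^{-1}(Z) \rceil$, and pushing forward gives $\adj_X(A,Z) \subseteq \cJ(A,Z)$. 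Tensoring this inclusion with $\cO_X$ and substituting the first equality yields
\[\cJ\left(X, \frac{1}{r}\V(J_r) + Z_{|X}\right) \subseteq \cJ(A,Z) \cdot \cO_X.\]

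There is no substantive obstacle here: the entire content of the corollary is packaged in Theorem \ref{Takagirestriction}, and what remains is a formal exactness argument together with the elementary observation that $R_X$ is effective. The proof should therefore be a short paragraph.
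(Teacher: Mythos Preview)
Your proposal is correct and follows the same approach as the paper: the first equality is read off from the surjectivity in Theorem \ref{Takagirestriction}, and the second inclusion comes from $\adj_X(A,Z) \subseteq \cJ(A,Z)$, which the paper simply asserts ``by definition'' while you unpack it via the effectivity of $R_X$. Your more detailed justification of that containment is fine and matches the intended reasoning.
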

\begin{proof} The first expression follows immediately from Theorem \ref{Takagirestriction} while the second follows from the easy observation that $\adj_X(A,Z) \subseteq \cJ(A, Z)$ by definition.
\end{proof}

\begin{cor} Notation as in the definition of the adjoint ideal. The adjoint ideal satisfies local vanishing, that is,
\[R^i\pi_* \cO_{\overline{A}}(\lceil K_{\overline{A}/A} - \pi^{-1}(Z) \rceil - cR_X) = 0\]
for all $i > 0$.
\end{cor}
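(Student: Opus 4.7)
The plan is to deduce the vanishing from the long exact sequence of higher direct images associated to the short exact sequence (\ref{ses}) of Lemma \ref{localvanishing}. Writing $D = \lceil K_{\overline{A}/A} - \pi^{-1}(Z)\rceil - cR_X$ and letting $f$ denote the restriction of $\pi$ to $\overline{X}$, that long exact sequence contains the pieces
\[R^i\pi_*(\cI_{\overline{X}}\cdot\cO_{\overline{A}}(D)) \to R^i\pi_*\cO_{\overline{A}}(D) \to R^i f_*\cO_{\overline{X}}(D_{|\overline{X}}) \to R^{i+1}\pi_*(\cI_{\overline{X}}\cdot\cO_{\overline{A}}(D)).\]
The first and last terms already vanish for $i>0$ by Lemma \ref{localvanishing}, so it suffices to show that $R^i f_*\cO_{\overline{X}}(D_{|\overline{X}}) = 0$ for all $i>0$.

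For this I would use the identification established in the proof of Theorem \ref{Takagirestriction}, namely equation (\ref{appliedadj}):
\[\cO_{\overline{X}}(D_{|\overline{X}}) = \cO_{\overline{X}}\!\left(\left\lceil K_{\overline{X}/X} - \tfrac{1}{r}\pi^{-1}(J_r) - \pi^{-1}(Z_{|X})\right\rceil\right).\]
Since $\pi$ was chosen (via Lemma \ref{sfr} and Corollary \ref{sfrcor}) to be simultaneously a factorizing resolution of $X$ and a log-resolution of $J_r$ and of $Z$, the restriction $f : \overline{X} \to X$ is a log-resolution of $\tfrac{1}{r}\V(J_r) + Z_{|X}$ on $X$. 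The standard local vanishing theorem for multiplier ideals on the (possibly singular but normal $\Q$-Gorenstein) variety $X$, see Theorem 9.4.1 in \cite{LazarsfeldPositivityII}, then gives $R^i f_*\cO_{\overline{X}}(D_{|\overline{X}}) = 0$ for $i>0$.

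Combining these vanishings in the long exact sequence yields $R^i\pi_*\cO_{\overline{A}}(D) = 0$ for every $i>0$, which is the desired local vanishing for the Takagi adjoint ideal. The only subtle point is ensuring that local vanishing applies on the $\Q$-Gorenstein base $X$; this is routine because the proof via Kawamata--Viehweg vanishing on $\overline{X}$ needs only smoothness of $\overline{X}$ and the simple normal crossings hypothesis on the boundary divisor, both of which are built into our choice of $\pi$.
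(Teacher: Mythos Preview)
Your proposal is correct and follows essentially the same argument as the paper: both use the long exact sequence of higher direct images from the short exact sequence in Lemma \ref{localvanishing}, invoke that lemma for the vanishing of $R^i\pi_*(\cI_{\overline{X}}\cdot\cO_{\overline{A}}(D))$, and then apply equation (\ref{appliedadj}) together with the local vanishing theorem (Theorem 9.4.1 in \cite{LazarsfeldPositivityII}) to kill $R^i f_*\cO_{\overline{X}}(D_{|\overline{X}})$. Your added remark that local vanishing on the $\Q$-Gorenstein base $X$ only requires smoothness of $\overline{X}$ and the simple normal crossings hypothesis is a helpful clarification, but the logic is identical to the paper's.
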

\begin{proof} This follows from the long exact sequence for $R^i\pi_*$ that arises from pushing forward
\[0 \to \cI_{\overline{X}} \cdot \cO_{\overline{A}}(D) \to \cO_{\overline{A}}(D) \to \cO_{\overline{X}}(D_{|\overline{X}}) \to 0\]
in Lemma \ref{localvanishing}. In this lemma we have already seen that the term on the left has vanishing higher direct images. On the other hand, the expression for $\cO_{\overline{X}}(D_{|\overline{X}})$ in (\ref{appliedadj}) shows that this sheaf has vanishing higher direct images by the local vanishing theorem again, see \cite{LazarsfeldPositivityII}, Theorem 9.4.1.
\end{proof}

\section{Takagi's subadditivity theorem}

In \cite{TakagiSubadditivity}, Takagi proves the following theorem.

\begin{thm}\label{subadditivity} Let $X$ be a $\Q$-Gorenstein variety and let $Z_1, Z_2$ be $\R_{>0}$-linear combinations of subschemes of $X$. Then
\[\Jac_X \cdot \cJ(X, Z_1 + Z_2) \subseteq \cJ(X, Z_1) \cdot \cJ(X, Z_2).\]
\end{thm}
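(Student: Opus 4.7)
The plan is to adapt the classical Demailly--Ein--Lazarsfeld proof of subadditivity to the $\Q$-Gorenstein setting, using the diagonal embedding and the generalized restriction theorem of the paper. The Jacobian factor will emerge by combining the lci-defect correction in Theorem \ref{Takagirestriction} with the identity $\Jac_X^r = \overline{J_r \cdot \cI_{r,X}}$ recorded in Section \ref{secadjunction}.

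Since the statement is local, I may embed $X$ into a smooth ambient variety $A$. The composition $X \hookrightarrow A \xrightarrow{\Delta_A} A \times A$ realizes $X$ as a $\Q$-Gorenstein subvariety $\Delta_X$ of the smooth variety $A \times A$, of codimension $N+c$ (with $N = \dim A$ and $c = \codim_A X$), with the same Gorenstein index $r$ and lci-defect $J_r$ as $X$. Note that the smaller ambient $X \times X$ is itself singular along the diagonal whenever $X$ is singular, which is precisely why the naive DEL proof does not extend. Choose lifts $Z_i^A \subseteq A$ of $Z_i \subseteq X$ and set $\tilde{Z} = p_1^{-1}(Z_1^A) + p_2^{-1}(Z_2^A)$.

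The heart of the argument is the ambient inclusion of ideals in $\cO_{A \times A}$
\[\adj_{\Delta_X}(A \times A, \tilde{Z}) \subseteq p_1^{*}\adj_X(A, Z_1^A) \cdot p_2^{*}\adj_X(A, Z_2^A). \qquad (\dagger)\]
Restricting $(\dagger)$ to $\Delta_X \cong X$ and applying Theorem \ref{Takagirestriction} on each side yields
\[\cJ\!\left(X, \tfrac{1}{r}\V(J_r) + Z_1 + Z_2\right) \subseteq \cJ\!\left(X, \tfrac{1}{r}\V(J_r) + Z_1\right) \cdot \cJ\!\left(X, \tfrac{1}{r}\V(J_r) + Z_2\right).\]
A separate direct divisor computation on a common log-resolution $\pi \colon \overline{X} \to X$ of $Z_1, Z_2, \Jac_X, J_r, \cI_{r,X}$, using $\Jac_X^r = \overline{J_r \cdot \cI_{r,X}}$ and the effectivity of $\cI_{r,X}$, shows
\[\Jac_X \cdot \cJ(X, Z_1 + Z_2) \subseteq \cJ\!\left(X, \tfrac{1}{r}\V(J_r) + Z_1 + Z_2\right).\]
Chaining these two inclusions with the trivial $\cJ(X, \tfrac{1}{r}\V(J_r) + Z_i) \subseteq \cJ(X, Z_i)$ produces the desired $\Jac_X \cdot \cJ(X, Z_1 + Z_2) \subseteq \cJ(X, Z_1) \cdot \cJ(X, Z_2)$.

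The principal obstacle is verifying $(\dagger)$. I would work on a common factorizing resolution $\mu \colon W \to A \times A$ refining $\pi \times \pi \colon \overline{A} \times \overline{A} \to A \times A$ (constructed by iterated application of Lemma \ref{sfr}), and then carefully compare the codimension-weighted boundary divisors $(N+c) R_{\Delta_X}$ (appearing in $\adj_{\Delta_X}$) with $c(p_1^{*}R_X + p_2^{*}R_X)$ (appearing in the product of the $\adj_X$'s) on $W$, using Theorem \ref{adjunction} together with a Künneth-type decomposition of $K_{W/A \times A}$.
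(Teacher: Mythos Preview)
Your overall DEL strategy (diagonal, restrict, K\"unneth) and the auxiliary inclusions $\Jac_X\cdot\cJ(X,Z_1+Z_2)\subseteq\cJ(X,\tfrac{1}{r}\V(J_r)+Z_1+Z_2)$ and $\cJ(X,\tfrac{1}{r}\V(J_r)+Z_i)\subseteq\cJ(X,Z_i)$ are fine. The genuine gap is $(\dagger)$. A K\"unneth-type inclusion for \emph{Takagi adjoint} ideals is not known, and your sketch does not prove it: the divisors $(N+c)R_{\Delta_X}$ and $c(p_1^{*}R_X+p_2^{*}R_X)$ live on different resolutions, the weights $N+c$ versus $c$ do not match in any evident way, and Theorem \ref{adjunction} only controls the restriction of $K_{W/A\times A}-(N+c)R_{\Delta_X}$ to the strict transform $\overline{\Delta_X}$, not the ambient comparison on $W$ that $(\dagger)$ demands. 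Even granting a K\"unneth decomposition of $K_{W/A\times A}$, you would still need an inequality of the form $K_{W/\overline{A}\times\overline{A}}+c\,\nu^{*}(p_1^{*}R_X+p_2^{*}R_X)\le (N+c)R_{\Delta_X}$ on $W$, and nothing in the paper's toolkit supplies this.

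The paper proceeds quite differently: it never embeds into a smooth $A\times A$ and never needs a product formula for adjoint ideals. Instead it works directly in the singular $\Q$-Gorenstein ambient $X\times X$ (where the adjoint ideal is still defined, Definition \ref{takadj}). Since Theorem \ref{Takagirestriction} requires a smooth ambient, it cannot be invoked as a black box; instead the paper proves by hand the single divisor inequality
\[
K_{\overline{\Delta}/\Delta}-G\ \le\ (K_{A'/X\times X}-nR_\Delta)_{|\overline{\Delta}},
\]
where $G$ is cut out by $\Jac_X$, via the Jacobian ``chain rule'' computation of Lemma \ref{prodjac}, which crucially exploits the product structure $\rho=g\times g$ (the matrix $\partial(g_i\circ\rho)/\partial z_j$ is block-diagonal and each block already computes $\Jac_g$). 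That inequality, together with Lemma \ref{localvanishing}, gives
\[
\overline{\Jac_X}\cdot\cJ(X,Z_1+Z_2)\ \subseteq\ \adj_\Delta(X\times X,\,p_1^{*}Z_1+p_2^{*}Z_2)\cdot\cO_\Delta,
\]
and one finishes with the trivial $\adj_\Delta\subseteq\cJ(X\times X,\cdot)$ and the K\"unneth formula for \emph{multiplier} ideals on $\Q$-Gorenstein varieties (Lemma \ref{robslemma}). In short, the paper replaces your unproven $(\dagger)$ by a concrete Jacobian inequality tailored to the diagonal; if you try to push your $A\times A$ approach through, you will find that establishing $(\dagger)$ requires essentially the same computation as Lemma \ref{prodjac}, with extra bookkeeping for the superfluous $N$ ambient directions.
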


This is a generalization of the subadditivity theorem from \cite{DELSubadditivity} to singular varieties. In his paper, Takagi uses tight closure techniques in positive characteristic to reduce the theorem to a problem in commutative algebra. We will present a new proof of a slightly stronger statement that uses only standard algebro-geometric characteristic zero techniques: resolution of singularities and Kawamata-Viehweg vanishing.

We proceed with the proof. Our approach will be similar to the approach of \cite{DELSubadditivity}. Specifically, we will make use of the following observation.

\begin{lemma}\label{robslemma} Let $X_1, X_2$ be $\Q$-Gorenstein varieties and let $Z_1$ and $Z_2$ be $\R_{>0}$-linear combinations of subschemes of $X_1$ and $X_2$, respectively. Let $p_1$ and $p_2$ be the projections from $X_1 \times X_2$ to $X_1$ and $X_2$ respectively. Then
\[\cJ(X_1 \times X_2, p_1^*Z_1 + p_2^* Z_2) = p_1^{-1}\cJ(X_1, Z_1) \cdot p_2^{-1}\cJ(X_2, Z_2).\]
\end{lemma}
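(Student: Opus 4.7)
The plan is to adapt the strategy used for the smooth case in \cite{DELSubadditivity} by passing to a product of log-resolutions. I would first choose log-resolutions $\pi_i : Y_i \to X_i$ of $Z_i$ for $i = 1, 2$, and form $\pi := \pi_1 \times \pi_2 : Y_1 \times Y_2 \to X_1 \times X_2$. Writing $q_1, q_2$ for the projections from $Y_1 \times Y_2$ and $p_1, p_2$ for those from $X_1 \times X_2$, the inverse image $\pi^{-1}(p_1^* Z_1 + p_2^* Z_2) = q_1^* \pi_1^{-1}Z_1 + q_2^* \pi_2^{-1} Z_2$ has SNC support: the two summands have prime components of the respective forms $D \times Y_2$ and $Y_1 \times D'$, so they meet each other transversely on the smooth product. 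Hence $\pi$ is a log-resolution of $p_1^*Z_1 + p_2^*Z_2$.

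Next I would combine two product formulas. The product $X_1 \times X_2$ is again $\Q$-Gorenstein (normality is preserved under products; if $r_i K_{X_i}$ is Cartier then $r_1 r_2 K_{X_1 \times X_2}$ is Cartier), and $K_{X_1 \times X_2} = p_1^* K_{X_1} + p_2^* K_{X_2}$ as $\Q$-Cartier divisors (check on the smooth locus and extend across codimension $\geq 2$). Together with $K_{Y_1 \times Y_2} = q_1^* K_{Y_1} + q_2^* K_{Y_2}$, this gives
\[K_{Y_1 \times Y_2 / X_1 \times X_2} - \pi^*(p_1^* Z_1 + p_2^* Z_2) = q_1^*\bigl(K_{Y_1/X_1} - \pi_1^* Z_1\bigr) + q_2^*\bigl(K_{Y_2/X_2} - \pi_2^* Z_2\bigr).\]
Because the $q_1^*$-summand and the $q_2^*$-summand share no common prime divisor, the round-up distributes, and setting $F_i := \cO_{Y_i}(\lceil K_{Y_i/X_i} - \pi_i^* Z_i \rceil)$ we get $\cO_{Y_1 \times Y_2}(\lceil \cdot \rceil) = q_1^* F_1 \otimes q_2^* F_2$.

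Finally I would push forward via $\pi$ using the K\"unneth-type identity
\[\pi_*\bigl(q_1^* F_1 \otimes q_2^* F_2\bigr) = p_1^* (\pi_1)_* F_1 \otimes p_2^* (\pi_2)_* F_2,\]
which follows from flat base change along each of the two projections. Since $(\pi_i)_* F_i = \cJ(X_i, Z_i)$ by definition of the multiplier ideal on a $\Q$-Gorenstein variety, and since the flatness of $p_i$ lets us identify the external tensor product of the two ideal sheaves inside $\cO_{X_1 \times X_2}$ with the product $p_1^{-1}\cJ(X_1,Z_1) \cdot p_2^{-1}\cJ(X_2,Z_2)$, this yields the claimed equality.

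The hard part is the clean bookkeeping in the middle step: verifying that $K_{X_1 \times X_2} = p_1^* K_{X_1} + p_2^* K_{X_2}$ genuinely holds as $\Q$-Cartier divisors (not just as Weil divisor classes on the smooth locus), and that the round-up on $Y_1 \times Y_2$ really splits into a sum of pullbacks. Once those two points are nailed down, the rest is formal: $\pi$ is a log-resolution because resolutions of the two factors remain SNC after product, and the pushforward identification is just flat base change applied twice.
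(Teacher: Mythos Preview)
Your proposal is correct and is essentially the same approach the paper has in mind: the paper's proof simply cites Proposition~9.5.22 of \cite{LazarsfeldPositivityII} and remarks that the argument there goes through once the smoothness hypothesis on the $X_i$ is dropped, which is precisely the product-of-log-resolutions plus K\"unneth argument you have written out. Your explicit verification that $X_1 \times X_2$ is again $\Q$-Gorenstein with $K_{X_1 \times X_2} = p_1^* K_{X_1} + p_2^* K_{X_2}$, and that the round-up splits because the pulled-back divisors share no components, is exactly the extra bookkeeping needed to justify the paper's one-line claim.
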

\begin{proof} The proof in \cite{LazarsfeldPositivityII}, Proposition 9.5.22, can be easily modified to remove the requirement that $X_1$ and $X_2$ be smooth.
\end{proof}

From now on let $X$ be a $\Q$-Gorenstein variety of dimension $n$ and a Gorenstein index $r$ and let $\Delta \subseteq X \times X$ be the diagonal. Let $g : X' \to X$ be a proper birational morphism from a smooth variety $X'$ and let $\rho : X' \times X' \to X \times X$ be the product morphism. Let $\Delta'$ be the strict transform of $\Delta$ in $X' \times X'$. Notice that $\Delta'$ is the diagonal of $X' \times X'$ and the induced morphism $\Delta' \to \Delta$ is just $g$.

The obstruction to the proof of Theorem \ref{subadditivity} is the restriction theorem: it requires a smooth ambient space. We will show that, in our very special situation of the diagonal in $X \times X$, an appropriate restriction theorem holds. To this end we need to uncover the analog of the adjunction formula. The following lemma becomes precisely the required analog once we use our earlier work to translate the Jacobian ideals into relative canonical classes.

\begin{lemma}\label{prodjac} Let
\[\xymatrix{
\overline{\Delta} \ar@{^{(}->}[r]\ar[d]_{f} \ar@/_2.2pc/[dd]_h & A' \ar[d]^{\pi} \ar@/^2.2pc/[dd]^\sigma \\
\Delta' \ar@{^{(}->}[r]\ar[d]_{g} & X' \times X' \ar[d]^{\rho} \\
\Delta \ar@{^{(}->}[r] & X \times X
}\]
be a factorizing resolution of $\Delta$ given by Lemma \ref{sfr}. Let
\[\cI_\Delta \cdot \cO_{A'} = \cI_{\overline{\Delta}} \cdot \cO_{A'}(-R_\Delta).\]
Then
\[(\Jac_{\pi} \cdot \cO_{\overline{\Delta}}) \cdot (\Jac_{g} \cdot \cO_{\overline{\Delta}})^2 \subseteq \Jac_h \cdot (\cO_{A'}(-nR_\Delta)) \cdot \cO_{\overline{\Delta}}.\]
\end{lemma}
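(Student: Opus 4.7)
The plan is to apply Lemma~\ref{changeofcoords} to the birational morphism $\pi : A' \to X' \times X'$ of smooth varieties, taking the subvariety to be $\Delta'$ (smooth of codimension $n$) and the auxiliary ideal to be $\fa := \cI_\Delta \cdot \cO_{X' \times X'}$, which is contained in $\cI_{\Delta'}$ because $\rho(\Delta') = \Delta$. I would work locally near a point $p \in \overline{\Delta}$: embed $X$ in a smooth ambient variety $Y$ with coordinates $y_1, \dots, y_{n+s}$ near $\sigma(p)$, so that $\cI_\Delta$ on $X \times X$ is generated by $\tilde h_i := (y_i - y_i')|_{X \times X}$. Then $h_i := \tilde h_i \circ \rho$ generate $\fa$ locally, and $h_i \circ \pi = \tilde h_i \circ \sigma$.

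For the left-hand side of the identity in Lemma~\ref{changeofcoords}, I would follow the argument of Lemma~\ref{jacadj}: since $\sigma = \rho \circ \pi$ is a factorizing resolution of $\Delta$, write $\tilde h_i \circ \sigma = g_\Delta \cdot \overline{h}_i$ locally, where $g_\Delta$ is a local generator of $\cO_{A'}(-R_\Delta)$ and the $\overline{h}_i$ generate $\cI_{\overline{\Delta}}$; differentiating and restricting to $\overline{\Delta}$ extracts a factor of $g_\Delta^n$ against $\Jac_{\overline{\Delta}} = \cO_{\overline{\Delta}}$, so the left-hand side becomes $\Jac_f \cdot \cO_{A'}(-n R_\Delta) \cdot \cO_{\overline{\Delta}}$. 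For the right-hand side, using local coordinates $(u_1, \dots, u_n, u_1', \dots, u_n')$ on $X' \times X'$ and writing $g_i := y_i \circ g$, the matrix $(\partial h_i / \partial z_j)$ has block form $(A(u) \mid -A(u'))$ with $A = (\partial g_i / \partial u_k)$. Restricting to $\Delta'$, where $u = u'$, any $n \times n$ minor whose chosen columns involve the same $\{1, \dots, n\}$-index from both halves has a repeated column and vanishes, while the remaining non-zero minors are all $\pm \det(A_R)$ for some $n$-row subset $R$. Hence the ideal of minors restricted to $\Delta'$ equals $[A]_n \cdot \cO_{\Delta'} = \Jac_g \cdot \cO_{\Delta'}$, and pulling back to $\overline{\Delta}$ via $f$ yields $\Jac_g \cdot \cO_{\overline{\Delta}}$.

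Putting the two sides together, Lemma~\ref{changeofcoords} produces the equality
\[
\Jac_f \cdot \cO_{A'}(-n R_\Delta) \cdot \cO_{\overline{\Delta}} \;=\; (\Jac_\pi \cdot \cO_{\overline{\Delta}}) \cdot (\Jac_g \cdot \cO_{\overline{\Delta}}).
\]
Multiplying both sides by $\Jac_g \cdot \cO_{\overline{\Delta}}$ and using Lemma~\ref{jaccomposition} to rewrite $\Jac_h = (\Jac_g \cdot \cO_{\overline{\Delta}}) \cdot \Jac_f$ gives the claimed containment as an equality, which is stronger than the stated inclusion. The main obstacle is the block-matrix analysis of the second paragraph: verifying that column duplication on $\Delta'$ collapses the ideal of $n \times n$ minors of $(A \mid -A)$ down to a single copy of $\Jac_g$ rather than $\Jac_g^2$, which is precisely what makes the right-hand side involve just one additional factor of $\Jac_g$ beyond what Lemma~\ref{jacadj} alone would give.
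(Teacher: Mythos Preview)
Your proposal is correct and follows essentially the same route as the paper: apply Lemma~\ref{changeofcoords} to $\pi:A'\to X'\times X'$ with the subvariety $\Delta'$ and the ideal $\cI_\Delta\cdot\cO_{X'\times X'}$, evaluate the left-hand side via the factorizing property exactly as in Lemma~\ref{jacadj}, and analyze the right-hand side via the block structure $(A(u)\mid -A(u'))$ of the matrix of partials; then multiply by $\Jac_g$ and invoke Lemma~\ref{jaccomposition}.

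The one genuine difference is your treatment of the block matrix. The paper only observes that the $n\times n$ minors coming entirely from one block already generate $\Jac_g$, so that $\Jac_g \subseteq [\partial(g_i\circ\rho)/\partial z_j]_n\cdot\cO_{\Delta'}$, and this inclusion is all that is needed for the stated containment. You go further: by noting that on $\Delta'$ the matrix becomes $(A\mid -A)$ and that any column choice hitting the same index in both halves produces a repeated column, you show that every nonvanishing $n\times n$ minor is $\pm\det(A_R)$ for some row set $R$, whence the minor ideal \emph{equals} $\Jac_g\cdot\cO_{\Delta'}$. Your argument therefore yields the lemma's conclusion as an \emph{equality} of ideals, a modest sharpening of what the paper proves. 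Both approaches are elementary; yours just extracts a bit more from the same linear-algebra picture.
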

\begin{proof} To simplify and unify notation, let $A = X' \times X'$ and $B = X \times X$ and fix a point $p \in A'$. Let $w_j$ be coordinates on $A'$ and let $z_j$ be coordinates on $A$ at $\pi(p)$ as in Setup \ref{jacsetup}. Let $\text{pr}_1, \text{pr}_2 : X \times X \to X$ be the two projections. Let $s_{1,j}$ be generators on $X$ of the maximal ideal of the local ring at $\text{pr}_1(\sigma(p))$, where $1 \leq j \leq M$ and similarly let $s_{2,j}$ be generators on $X$ of the maximal ideal of the local ring at $\text{pr}_2(\sigma(p))$. Finally, let $x_i = \text{pr}_1^* (s_{1,i})$ and $y_i = \text{pr}_2^* (s_{2,i})$. Then
\[(\cI_\Delta)_{\sigma(p)} = (g_1, \ldots, g_M)\]
where $g_i = x_i - y_i$. By Lemma \ref{changeofcoords} applied with the above choices, $h_i = g_i \circ \rho$, $X = \Delta'$ and $X' = \overline{\Delta}$ we have that
\begin{equation}\label{jac1}
\Jac_f \cdot \left( \left[ \frac{\partial (g_i \circ \sigma)}{\partial w_j} \right]_n \right) \cdot \cO_{\overline{\Delta}} = \left( \Jac_{\pi} \cdot \left[ \frac{\partial(g_i \circ \rho)}{\partial z_j} \right]_n \cdot \cO_{A'}\right) \cdot \cO_{\overline{\Delta}}.
\end{equation}

Suppose that we can show that on $\Delta'$ we have
\begin{equation}\label{jac2}
(\Jac_{g})^2 \subseteq \Jac_{g} \cdot \left( \left[ \frac{\partial (g_i \circ \rho)}{\partial z_j} \right]_n \cdot \cO_{\Delta'} \right).
\end{equation}
Then, after multiplying both sides of (\ref{jac1}) by $\Jac_g$ and using Lemma \ref{jaccomposition}, we obtain first of all that
\[(\Jac_{\pi} \cdot \cO_{\overline{\Delta}}) \cdot (\Jac_{g} \cdot \cO_{\overline{\Delta}})^2 \subseteq \Jac_h \cdot \left( \left[ \frac{\partial (g_i \circ \sigma)}{\partial w_j} \right]_n \cdot \cO_{\overline{\Delta}} \right).\]
As in the proof of Lemma \ref{jacadj}, write next
\[(\cI_\Delta \cdot \cO_{A'})_p = (g_1 \circ \sigma, \ldots, g_M \circ \sigma) = (r \overline{g}_1, \ldots, r \overline{g}_M)\]
where $r$ is a local generator of the sheaf $\cO_{A'}(-R_{\overline{\Delta}})$. We now have that
\[\left(\frac{\partial (g_i \circ \sigma)}{\partial w_j}\right)_{|\overline{\Delta}} = \left( r \frac{\partial \overline{g}_i}{\partial w_j} + \overline{g}_i \frac{\partial r}{\partial w_j} \right)_{|\overline{\Delta}} = \left( r \frac{\partial \overline{g_i}}{\partial w_j} \right)_{|\overline{\Delta}}\]
and so
\[\left( \left[ \frac{\partial (g_i \circ \sigma)}{\partial w_j} \right]_n \cdot \cO_{\overline{\Delta}} \right) = r^n \cdot \Jac_{\overline{\Delta}} = (r^n)\]
since $\overline{\Delta}$ is smooth. But this concludes the proof, assuming (\ref{jac2}).

It remains to show (\ref{jac2}). In fact, it is clearly enough to show that
\begin{equation}\label{jac4}
\Jac_{g} \subseteq \left[ \frac{\partial (g_i \circ \rho)}{\partial z_j}\right]_n \cdot \cO_{\Delta'}.
\end{equation}
For this we choose the $z_j$ as follows: let $\text{pr}'_1, \text{pr}'_2 : X' \times X' \to X'$ be the two projections and let $s'_{1,1}, \ldots, s'_{1,n}$ be local coordinates on $X'$ at $\text{pr}'_1(\pi(p))$, $s'_{2,1}, \ldots, s'_{2,n}$ local coordinates on $X'$ at $\text{pr}'_2(\pi(p))$. Let $x_i' = (\textnormal{pr}'_1)^*(s'_{1,i})$ and $y_i' = (\textnormal{pr}'_2)^*(s'_{2,i})$. Set $z_j = x'_j$ for $1 \leq j \leq n$ and $z_j = y'_{j-n}$ for $n + 1 \leq j \leq 2n$.

Notice that, since $y_i \circ \rho$ does not depend on $x'_j$ we have that
\[\frac{\partial (y_i \circ \rho)}{\partial x'_j} = 0.\]
It follows that, in these coordinates, the matrix of partials
\[\frac{\partial (g_i \circ \rho)}{\partial z_j} = \frac{\partial (x_i \circ \rho)}{\partial z_j} - \frac{\partial (y_i \circ \rho)}{\partial z_j}\]
is block diagonal with two blocks,
\[\frac{\partial (x_i \circ \rho)}{\partial x'_j},\ -\frac{\partial (y_i \circ \rho)}{\partial y'_j}.\]
It is furthermore clear that the ideal of $n \times n$ - minors of each of these two blocks is $\Jac_g$. But this in particular proves (\ref{jac4}), as required.
\end{proof}

\begin{cor} Notation as in the lemma. Suppose that $h$ furthermore log-resolves $\cI_{r,X}$ and $J_r$. Let $F$ be the divisor defined by
\[\cI_{r, \Delta} \cdot \cO_{\overline{\Delta}} = \cO_{\overline{\Delta}}(-F).\]
Then the following inequality is true.
\[K_{\overline{\Delta}/\Delta} - \frac{1}{r} F \leq (K_{A'/B} - n R_\Delta)_{|\overline{\Delta}}.\]
\end{cor}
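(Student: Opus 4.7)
The plan is to raise the containment of Lemma \ref{prodjac} to the $r$-th power and then translate each Jacobian ideal into an invertible subsheaf on $\overline\Delta$ using Lemma \ref{adjunction2}. Because $h$ log-resolves both $\cI_{r,X}$ and $J_r$, every ideal sheaf appearing in the translation will become principal on $\overline\Delta$, so the inclusion of line bundles will convert into an inequality of $\Q$-divisors in the usual way.

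First I would interpret each factor. Since $\pi\colon A' \to X' \times X'$ is a morphism of smooth varieties, $\Jac_\pi = \cO_{A'}(-K_{A'/(X' \times X')})$. Applying Lemma \ref{adjunction2} to $g\colon X' \to X$ gives $\Jac_g^r = (\cI_{r,X} \cdot \cO_{X'}) \cdot \cO_{X'}(-rK_{X'/X})$, and applying it to $h\colon \overline\Delta \to \Delta$ gives $\Jac_h^r = (\cI_{r,\Delta} \cdot \cO_{\overline\Delta}) \cdot \cO_{\overline\Delta}(-rK_{\overline\Delta/\Delta})$. Under the isomorphism $\Delta \cong X$ the ideals $\cI_{r,X}$ and $\cI_{r,\Delta}$ agree, and by the log-resolution hypothesis, pulling back either one to $\overline\Delta$ (whether through $X'$ via a projection of $X' \times X'$, or directly through $h$) produces the single line bundle $\cO_{\overline\Delta}(-F)$. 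Raising the $r$-th power of Lemma \ref{prodjac}'s containment and using that $\cO_{\overline\Delta}(-D_1) \subseteq \cO_{\overline\Delta}(-D_2) \Leftrightarrow D_1 \geq D_2$, I obtain
\[ rK_{A'/(X' \times X')}|_{\overline\Delta} + 2F + 2rK_{X'/X}|_{\overline\Delta} \;\geq\; F + rK_{\overline\Delta/\Delta} + nrR_\Delta|_{\overline\Delta}, \]
where the factor $2$ in front of $F$ on the left comes from squaring $\Jac_g$ in Lemma \ref{prodjac}. Rearranging and dividing through by $r$ gives
\[ K_{\overline\Delta/\Delta} - \tfrac{1}{r}F \;\leq\; K_{A'/(X' \times X')}|_{\overline\Delta} + 2K_{X'/X}|_{\overline\Delta} - nR_\Delta|_{\overline\Delta}. \]

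To conclude, I would identify the right-hand side with $(K_{A'/B} - nR_\Delta)|_{\overline\Delta}$. By Lemma \ref{jaccomposition} applied to the composition $A' \to X' \times X' \to X \times X$, we have $K_{A'/B} = K_{A'/(X' \times X')} + K_{(X' \times X')/(X \times X)}$. Since $rK_{X \times X} = p_1^*(rK_X) + p_2^*(rK_X)$ is Cartier, pulling back through $\rho$ yields $K_{(X' \times X')/(X \times X)} = (p_1')^* K_{X'/X} + (p_2')^* K_{X'/X}$, and the two projections $p_i'$ agree on the diagonal $\Delta'$ and hence on $\overline\Delta$, so this term restricts to $2K_{X'/X}|_{\overline\Delta}$, as required. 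The main obstacle is really just the bookkeeping: four Jacobian ideals living on four different spaces must be pulled back or restricted to $\overline\Delta$ consistently, and the factor $2$ coming from the product structure of $\rho$ needs to exactly cancel the factor $2$ from squaring $\Jac_g$, so that the contributions of $\cI_{r,X}$ leave behind precisely the term $\tfrac{1}{r}F$ and nothing more.
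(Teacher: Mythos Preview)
Your proposal is correct and follows essentially the same route as the paper's proof: both raise the containment of Lemma \ref{prodjac} to the $r$-th power, invoke Lemma \ref{adjunction2} to rewrite $\Jac_h^r$ and $\Jac_g^r$ in terms of $\cI_{r,X}$ and relative canonical divisors, and then use the product structure of $\rho = g \times g$ to identify $2K_{X'/X}|_{\overline\Delta}$ with $(K_{(X'\times X')/(X\times X)})|_{\overline\Delta}$. The only differences are cosmetic: the paper writes $A = X'\times X'$, $B = X\times X$ and $K_{\Delta'/\Delta}$ in place of your $K_{X'/X}$, and it records the intermediate inequality with all signs negated, but the substance is identical.
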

\begin{proof} It follows from Lemma \ref{adjunction2} that
\[\Jac_h^r = (\cI_{r,X} \cdot \cO_{\overline{\Delta}}) \cdot \cO_{\overline{\Delta}}(-rK_{\overline{\Delta}/\Delta}),\]
and similarly for $\Jac_g^r$. Combining this with the inequality of ideals
\[(\Jac_{\pi} \cdot \cO_{\overline{\Delta}}) \cdot (\Jac_{g} \cdot \cO_{\overline{\Delta}})^2 \subseteq \Jac_h \cdot (\cO_A(-nR_\Delta)) \cdot \cO_{\overline{\Delta}}\]
we get the following inequality of divisors:
\[- (K_{A'/A})_{|\overline{\Delta}} - \frac{2}{r} F - 2 f^* K_{\Delta'/\Delta} \leq -\frac{1}{r} F - K_{\overline{\Delta}/\Delta} - (n R_\Delta)_{|\overline{\Delta}}.\]
Since the morphism $\rho : A \to B$ is the product $g \times g : X' \times X' \to X \times X$, we have that
\[2K_{\Delta'/\Delta} = (K_{A/B})_{|\Delta'}.\]
The inequality simplifies to
\[- (K_{A'/A})_{|\overline{\Delta}} - \frac{1}{r} F - f^* (K_{A/B})_{|\Delta'} \leq - K_{\overline{\Delta}/\Delta} - (n R_\Delta)_{|\overline{\Delta}}.\]
The corollary now follows easily.
\end{proof}

We can now finally prove our version of Takagi's subadditivity theorem.

\begin{thm}\label{subadditivity} Let $X$ be a $\Q$-Gorenstein variety and let $Z_1, Z_2$ be $\R_{>0}$-linear combinations of subschemes of $X$. Then
\[\overline{\Jac_X} \cdot \cJ(X, Z_1 + Z_2) \subseteq \cJ(X, Z_1) \cdot \cJ(X, Z_2).\]
\end{thm}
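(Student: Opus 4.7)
The strategy is to imitate the proof of Theorem \ref{Takagirestriction} applied to $\Delta \subseteq B := X \times X$, with the twist that $B$ is only $\Q$-Gorenstein and that the adjunction \emph{equality} of Theorem \ref{adjunction} is replaced by the adjunction \emph{inequality} of the preceding Corollary. Choose a log resolution $g : X' \to X$ of $\Jac_X$, $Z_1$, $Z_2$, $J_r$, and $\cI_{r,X}$; let $\rho = g \times g$; and pick $\pi : A' \to X' \times X'$ so that $\sigma := \rho \circ \pi$ is a factorizing resolution of $\Delta$ in $B$ (using Lemma \ref{sfr} on the smooth intermediate $X' \times X'$) that also log-resolves $Z := p_1^* Z_1 + p_2^* Z_2$. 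Let $\overline\Delta$, $h$, and $R_\Delta$ be the usual associated data. By Lemma \ref{robslemma}, $\cJ(X, Z_1) \cdot \cJ(X, Z_2) = \cJ(B, Z) \cdot \cO_\Delta$, so it suffices to prove $\overline{\Jac_X} \cdot \cJ(X, Z_1 + Z_2) \subseteq \cJ(B, Z) \cdot \cO_\Delta$.

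Setting $\tilde D := \lceil K_{A'/B} - \sigma^{-1}Z\rceil - n R_\Delta$, consider the restriction sequence
\[0 \to \cI_{\overline\Delta} \cdot \cO_{A'}(\tilde D) \to \cO_{A'}(\tilde D) \to \cO_{\overline\Delta}(\tilde D|_{\overline\Delta}) \to 0.\]
Imitating Lemma \ref{localvanishing} (blow up $\overline\Delta$ in $A'$ and identify the kernel term), and then routing the vanishing argument through the smooth intermediate $X' \times X'$ via the Leray-type lemma of Section 3 and Lazarsfeld's local vanishing, I expect $R^1 \sigma_*(\cI_{\overline\Delta} \cdot \cO_{A'}(\tilde D)) = 0$. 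Pushforward then produces a surjection $\sigma_*\cO_{A'}(\tilde D) \twoheadrightarrow h_*\cO_{\overline\Delta}(\tilde D|_{\overline\Delta})$; since $\tilde D \leq \lceil K_{A'/B} - \sigma^{-1}Z\rceil$, the source is contained in $\cJ(B, Z)$. Viewing both $h_*\cO_{\overline\Delta}(\tilde D|_{\overline\Delta})$ and $\sigma_*\cO_{A'}(\tilde D)\cdot\cO_\Delta$ as subsheaves of the constant sheaf of rational functions on $\overline\Delta = \Delta$, the surjection becomes restriction to $\Delta$, yielding $h_*\cO_{\overline\Delta}(\tilde D|_{\overline\Delta}) \subseteq \cJ(B, Z) \cdot \cO_\Delta$.

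For the opposing containment, translate the Corollary's inequality via the l.c.i.-defect identity. Writing $G := \text{div}(\Jac_X \cdot \cO_{\overline\Delta})$ and $F' := \text{div}(J_r \cdot \cO_{\overline\Delta})$, the relation $\overline{J_r \cdot \cI_{r,X}} = \overline{\Jac_X^r}$ gives $rG = F + F'$, so the Corollary becomes
\[K_{\overline\Delta/\Delta} - G \leq (K_{A'/B} - nR_\Delta)|_{\overline\Delta} - \tfrac{1}{r}F' \leq (K_{A'/B} - nR_\Delta)|_{\overline\Delta}.\]
Subtract $h^{-1}(Z_1+Z_2) = \sigma^{-1}(Z)|_{\overline\Delta}$, take ceilings, and apply $h_*$ to get
\[h_*\cO_{\overline\Delta}(\lceil K_{\overline\Delta/\Delta} - G - h^{-1}(Z_1+Z_2)\rceil) \subseteq h_*\cO_{\overline\Delta}(\tilde D|_{\overline\Delta}).\]
The integral closure identity $\overline{\Jac_X} = h_*\cO_{\overline\Delta}(-G)$ (valid since $h$ log-resolves $\Jac_X$ and $\overline\Delta$ is smooth), combined with a section-level computation using $\cO(-G) \cdot \cO(D) = \cO(D - G)$, shows the left-hand side above contains $\overline{\Jac_X} \cdot \cJ(X, Z_1 + Z_2)$. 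Chaining with the previous paragraph gives the desired inclusion.

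The main obstacle is the higher-direct-image vanishing in paragraph two. Since Lazarsfeld's local vanishing demands a smooth ambient while $B$ is singular, one has to factor the vanishing argument through the smooth $X' \times X'$ via Leray---a technical but straightforward extension of Lemma \ref{localvanishing}. A secondary subtlety is the identification $h_*\cO_{\overline\Delta}(\tilde D|_{\overline\Delta}) \subseteq \cJ(B, Z) \cdot \cO_\Delta$: the former is \emph{a priori} only a quotient of $\sigma_*\cO_{A'}(\tilde D)$ via the restriction SES, and relating this quotient to the subsheaf $\sigma_*\cO_{A'}(\tilde D) \cdot \cO_\Delta \subseteq \cO_\Delta$ requires a careful diagram chase at the level of sections in the constant sheaf of rational functions.
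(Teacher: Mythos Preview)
Your proposal is correct and follows essentially the same route as the paper: reduce via Lemma \ref{robslemma}, identify $h_*\cO_{\overline\Delta}(\tilde D|_{\overline\Delta})$ with $\adj_\Delta(B,Z)\cdot\cO_\Delta$ using Lemma \ref{localvanishing} (which is already stated for $\Q$-Gorenstein ambient $A$ via Definition \ref{myadj}, so your vanishing worry and diagram chase are unnecessary), and combine with the Corollary's adjunction inequality. The only cosmetic difference is that the paper obtains $K_{\overline\Delta/\Delta}-G\le (K_{A'/B}-nR_\Delta)|_{\overline\Delta}$ directly from the containment $\Jac_X^r\subseteq\cI_{r,X}$ (hence $G\ge\tfrac{1}{r}F$), rather than via your l.c.i.-defect decomposition $rG=F+F'$.
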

\begin{proof} It is enough to show that
\[\overline{\Jac_X} \cdot \cJ(X, Z_1 + Z_2) \subseteq \adj_{\Delta}(X \times X, p_1^*Z_1 + p_2^*Z_2) \cdot \cO_{\Delta}\]
where $p_1, p_2 : X \times X \to X$ are the two projections, since we have the easy inequality
\[\adj_{\Delta}(X \times X, p_1^*Z_1 + p_2^*Z_2) \subseteq \cJ(X \times X, p_1^*Z_1 + p_2^*Z_2)\]
and we will conclude by applying Lemma \ref{robslemma}. We let $\sigma : A' \to X \times X$ be the log-resolution as in Lemma \ref{prodjac} and we choose $g : X' \to X$ to also log-resolve $\Jac_X$, $\cI_{r,X}$, $J_r$ and $Z_1$ and $Z_2$, and $\sigma$ to be a log-resolution of all of these that is also a factorizing resolution for $\Delta$. Then, with the notation of the preceding corollary, we obtain that
\[K_{\overline{\Delta}/\Delta} - \frac{1}{r} F \leq (K_{A'/B} - n R_\Delta)_{|\overline{\Delta}}.\]
Let $G$ be the divisor defined by
\[\Jac_X \cdot \cO_{\overline{\Delta}} = \cO_{\overline{\Delta}}(-G).\]
Since $\cI_{r, \Delta}$ is an ideal that contains $\Jac_X^r$ (we do not need to perform this estimate, see the remark after the proof) we finally obtain the inequality
\[K_{\overline{\Delta}/\Delta} - G \leq (K_{A'/B} - n R_\Delta)_{|\overline{\Delta}}.\]
But by Lemma \ref{localvanishing} we have
\[\adj_\Delta(X \times X, p_1^*Z_1 + p_2^*Z_2) \cdot \cO_{\Delta} = h_*(\cO_{A'}(\lceil K_{A'/B} - \sigma^{-1} (p_1^*Z_1 + p_2^*Z_2) - n R_\Delta \rceil)_{|\overline{\Delta}}).\]
Putting this together with our inequality we are done.
\end{proof}

\begin{remark}\label{Strongersubadditivity} In fact, the proof also shows that
\[\langle \cI_{r,X} \rangle^{1/r} \cdot \cJ(X, Z_1 + Z_2) \subseteq \cJ(X, Z_1) \cdot \cJ(X, Z_2)\]
in the sense of Kawakita's $\Q$-ideals and his partial ordering on them, see \cite{KawakitaLogDisc}. Indeed, to obtain this we simply have to not perform the estimate in the proof that refers to this remark.
\end{remark}

\bibliography{paper}{}
\bibliographystyle{plain}

\end{document}